\numberwithin{equation}{section} \DeclareMathSizes{2}{10}{12}{13}
\newtheorem{theorem}{Theorem}[section]
\newtheorem{lemma}[theorem]{Lemma}
\newtheorem{prop}[theorem]{Proposition}
\newtheorem{remark}[theorem]{Remark}
\newtheorem{definition}[theorem]{Definition}
\numberwithin{equation}{section}
\begin{document}
\title{Categorified Fredholm Modules and Chern Characters}
\author{Mamta Balodi\footnote{Email: mamta.balodi@gmail.com}~ and Abhishek Banerjee\footnote{Email: abhishekbanerjee1313@gmail.com} \footnote{§AB was partially supported by SERB Matrics fellowship MTR/2017/000112
}}
\date{}
\maketitle
\centerline{\emph{Department of Mathematics, Indian Institute of Science, Bangalore - 560012, India.}}

\begin{abstract} In this paper, we continue  our program of systematic categorification of the Noncommutative Differential Geometry of Connes. We replace a ring with a small $\mathbb C$-linear category, seen as a ring with several objects in the sense of Mitchell. We introduced Fredholm modules over this category and construct a Chern character taking values in the cyclic cohomology of $\mathcal C$. We show that this categorified Chern character depends only on the homotopy
class of the Fredholm module and is well-behaved
with respect to the periodicity operator in cyclic cohomology.
\end{abstract}

\emph{{\bf \emph{MSC(2010) Subject Classification:}} 18F25, 19D55, 53C99}

\medskip

\emph{{\bf \emph{Keywords:} } Categorified Fredholm modules, Categorified Chern characters. }

\section{Introduction}

In this paper, we continue  our program of systematic categorification of the Noncommutative Differential Geometry of Connes \cite{C2}. We replace a ring with a small $\mathbb C$-linear category, seen as a ring with several objects in the sense of Mitchell \cite{Mit0}. In \cite{BB1}, we have described the 
cocycles and coboundaries
that determine the cyclic cohomology $H^\bullet_\lambda(\mathcal C)$  by extending Connes' original construction of cyclic cohomology in terms of
cycles and closed graded traces on differential graded algebras. The key objective of \cite{BB1} is
to describe cyclic cocycles and coboundaries in terms of closed graded traces over a differential graded  semicategory. In fact, we showed
in \cite{BB1} that each  cyclic cocycle in $Z^n_\lambda(\mathcal C)$ is obtained as the character of a $n$-dimensional cycle
over $\mathcal C$, while every cyclic coboundary in $B^n_\lambda(\mathcal C)$
 is given by the character of a ``vanishing
cycle.'' This description of cyclic cohomology is best suited for the study of `categorified Fredholm modules'
that we now develop in this paper.

\smallskip
In the Noncommutative Differential Geometry of Connes \cite{C2}, an $n$-summable Fredholm module over an algebra
$A$ leads to a class in the $n$-th cyclic cohomology of $A$. In other words, we have a Chern character
\begin{equation*}
ch^* :\{\mbox{$n$-summable Fredholm modules over $A$}\} \longrightarrow  H_\lambda^n
(A)
\end{equation*}
This suggests that we should consider Fredholm modules over a small $\mathbb C$-linear category $\mathcal C$. This would consist
of a functor on $\mathcal C$ taking values in the category $SHilb$ of separable Hilbert spaces and bounded linear maps (see Definition \ref{dD3.1}). In this paper, we proceed in a manner analogous to Connes \cite{C2} to show that an $n$-summable Fredholm module over $\mathcal C$ leads to a cycle
over $\mathcal C$ and hence to a class in the cyclic cohomology of $\mathcal C$. We show that this categorified Chern character is well behaved with respect to the periodicity operator on the cyclic cohomology of $\mathcal C$. We also show that the categorified Chern character depends only on the homotopy
class of the Fredholm module. 

\smallskip
The literature on cyclic homology and Chern characters is vast and we refer the reader, for instance, to \cite{C1}, \cite{C3}, \cite{GSz}, \cite{GS}, \cite{HSS}, \cite{Kar1}, \cite{Kar2},  \cite{LQ}, \cite{Loday}, \cite{carthy} for more on this subject.

\smallskip
We fix here some notation that we will use throughout this paper. We will denote by $(C^\bullet(\mathcal C),b)$ the standard Hochschild complex
of the dual of the cyclic nerve of $\mathcal C$. The modified Hochschild differential (with the last face operator missing) will be denoted by $b'$. 
Accordingly, the Hochschild cocycles and Hochschild coboundaries will be denoted by $Z^\bullet(\mathcal C)$ and $B^\bullet(\mathcal C)$ respectively.

\smallskip
We will denote by $\tau_n$ the unsigned cyclic operator on $C^n(\mathcal C)$ and by $\lambda_n$ the signed cyclic operator $(-1)^n\tau_n$ on $C^n(\mathcal C)$. 
The complex computing cyclic cohomology will be denoted by $C^\bullet_\lambda := Ker(1-\lambda)$. Accordingly, the cyclic cocycles and cyclic coboundaries will be denoted by $Z^\bullet_\lambda(\mathcal C)$ and $B^\bullet_\lambda(\mathcal C)$ respectively.

\section{Even Fredholm modules over categories}\label{evencycl}

Throughout, we let $\mathcal C$ be a small $\mathbb C$-linear category. Our categorified Fredholm modules will consist of functors from $\mathcal C$  taking values in 
separable Hilbert spaces.  Let $SHilb$ be the category whose objects are separable Hilbert spaces and  whose morphisms are bounded linear maps.  

\smallskip

 Given separable Hilbert spaces $\mathcal H_1$ and $\mathcal H_2$, let $\mathcal{B}(\mathcal H_1,\mathcal H_2)$ denote the space of all bounded linear operators from $\mathcal H_1$ to $\mathcal H_2$  and $\mathcal{B}^\infty(\mathcal H_1,\mathcal H_2) \subseteq \mathcal{B}(\mathcal H_1,\mathcal H_2)$ be the space of all  compact operators. For any bounded operator  $T \in \mathcal{B}(\mathcal H_1,\mathcal H_2)$, let $\mu_n(T)$ denote the $n$-th singular value of $T$. In other words, $\mu_n(T)$ is the $n$-th (arranged
 in decreasing order) eigenvalue of the positive operator  $|T|:=(T^*T)^{\frac{1}{2}}$. 
For $1 \leq p < \infty$, the $p$-th Schatten class is defined to be the space
\begin{equation*}
\mathcal{B}^p(\mathcal H_1,\mathcal H_2):=\{T \in \mathcal{B}(\mathcal H_1,\mathcal H_2)~|~ \sum \mu_n(T)^p < \infty\}
\end{equation*}
Clearly, $\mathcal{B}^p(\mathcal H_1,\mathcal H_2) \subseteq  \mathcal{B}^q(\mathcal H_1,\mathcal H_2)$  for $p \leq q$. For $p=1$, the space $\mathcal{B}^1(\mathcal H_1,\mathcal H_2)$ is the collection of all trace class operators from $\mathcal H_1$ to $\mathcal H_2$. For  $T \in \mathcal{B}^1(\mathcal H_1,\mathcal H_2)$, we write $Tr(T):=\sum \mu_n(T)$. It is well known that
\begin{equation}\label{commutr}
Tr(T_1T_2)=Tr(T_2T_1) \quad \forall T_1 \in \mathcal{B}^{n_1}(\mathcal{H}, \mathcal{H}'),~ T_2 \in \mathcal{B}^{n_2}(\mathcal{H}', \mathcal{H}), \frac{1}{n_1}+\frac{1}{n_2}=1
\end{equation}

We note that $\mathcal{B}^p(\mathcal H_1,\mathcal H_2)$ is an ``ideal" in $SHilb$ in the following sense: consider the functor
\begin{equation*}
\begin{array}{c} \mathcal{B}(-,-):SHilb^{op} \otimes SHilb \longrightarrow Vect_\mathbb{C} \qquad \mathcal B(-,-)(\mathcal H_1,\mathcal H_2):=\mathcal{B}(\mathcal H_1,\mathcal H_2)
\\ \mathcal{B}(-,-)(\phi_1,\phi_2):\mathcal{B}(\mathcal H_1, \mathcal H_2) \longrightarrow \mathcal{B}(\mathcal H_1',\mathcal H_2')\qquad T \mapsto \phi_2T\phi_1\\
\end{array}
\end{equation*}  taking values in the category $Vect_{\mathbb C}$ of $\mathbb C$-vector spaces. Then, $\mathcal{B}^p(-,-)$ is a subfunctor of $\mathcal{B}(-,-)$. In other words, for  morphisms $\phi_1:\mathcal H_1'\longrightarrow \mathcal H_1$, $\phi_2:\mathcal H_2\longrightarrow \mathcal H_2'$  and any $T \in \mathcal{B}^p(\mathcal H_1,\mathcal H_2)$, we have $\phi_2T\phi_1 \in \mathcal{B}^p(\mathcal H_1',\mathcal H_2')$.

\smallskip
We fix the following convention for the commutator notation:  Let $\mathscr{H}:\mathcal{C}  \longrightarrow SHilb$ be a functor and $\mathcal{G}:=\{\mathcal{G}_X:\mathscr{H}(X) \longrightarrow \mathscr{H}(X)\}_{X \in Ob(\mathcal{C})}$ be a collection of bounded linear operators. Then, we set 
\begin{equation*}
[\mathcal G,-]:\mathcal B(\mathscr H(X),\mathscr H(Y))\longrightarrow \mathcal B(\mathscr H(X),\mathscr H(Y)) \qquad [\mathcal G,T]:= \mathcal{G}_Y \circ T - T\circ \mathcal{G}_X\in \mathcal B(\mathscr H(X),\mathscr H(Y))
\end{equation*} 
We now let $SHilb_{\mathbb{Z}_2}$ be the category whose objects are $\mathbb{Z}_2$-graded separable Hilbert spaces and whose morphims are bounded linear maps.   Let $\mathscr{H}:\mathcal{C}  \longrightarrow SHilb_{\mathbb{Z}_2}$ be a functor and $\mathcal{G}:=\{\mathcal{G}_X:\mathscr{H}(X) \longrightarrow \mathscr{H}(X)\}_{X \in Ob(\mathcal{C})}$ be a collection of bounded linear operators of the same degree $|\mathcal G|$. Then, we set 
\begin{equation}\label{gradcomm}
\begin{array}{c}
[\mathcal G,-]:\mathcal B(\mathscr H(X),\mathscr H(Y))\longrightarrow \mathcal B(\mathscr H(X),\mathscr H(Y)) \\  
\mbox{$[\mathcal G,T]:= \mathcal{G}_Y \circ T -(-1)^{|\mathcal G||T|} T\circ \mathcal{G}_X\in \mathcal B(\mathscr H(X),\mathscr H(Y))$}\\
\end{array}
\end{equation}

\begin{definition}\label{dD3.1}
Let $\mathcal{C}$ be a small $\mathbb{C}$-category and let $p \in [1,\infty)$. We consider a pair $(\mathscr H,\mathcal F)$ as follows.

\smallskip
\begin{itemize}
\item[(1)] A functor  $\mathscr{H}:\mathcal{C}  \longrightarrow SHilb_{\mathbb{Z}_2}$ such that $\mathscr H(f):\mathscr H(X)\longrightarrow \mathscr H(Y)$ is a linear operator
of degree $0$ for each  $f \in Hom_{\mathcal{C}}(X,Y)$.

\item[(2)] A collection $\mathcal{F}:=\{\mathcal{F}_X:\mathscr{H}(X) \longrightarrow \mathscr{H}(X)\}_{X \in Ob(\mathcal{C})}$  of bounded linear operators of degree $1$ such that $\mathcal{F}_X^2=id_{\mathscr{H}(X)}$ for each $X \in Ob(\mathcal{C})$.
\end{itemize}

The pair $(\mathscr{H},\mathcal F)$ is said to be a $p$-summable even Fredholm module over the category $\mathcal{C}$ if
every $f \in Hom_{\mathcal{C}}(X,Y)$ satisfies
\begin{equation}\label{stneve}
[\mathcal{F},f]:= \left(\mathcal{F}_Y \circ \mathscr{H}(f) - \mathscr{H}(f) \circ \mathcal{F}_X \right)     \in \mathcal{B}^p\left(\mathscr{H}(X),\mathscr{H}(Y)\right)
\end{equation}
\end{definition} 

 We first recall from \cite{BB1}  the construction of universal DG-semicategories and the notion of closed graded traces on DG-semicategories. In essence, a semicategory is a ``category without units.'' 

\begin{definition}(see Mitchell \cite[$\S$ 4]{Mit})
A semicategory $\mathcal D$ consists of a collection $Ob(\mathcal{D})$ of objects together with a set of morphisms $Hom_{\mathcal{D}}(X,Y)$ for each $X,Y \in Ob(\mathcal{D})$ and an associative composition. A semifunctor $F:\mathcal{D} \longrightarrow \mathcal{D}'$ between semicategories assigns to each $X \in Ob(\mathcal{D})$ an object $F(X) \in Ob(\mathcal{D}')$, 
a morphism $F(f) \in Hom_{\mathcal D'}(F(X),F(Y))$ to each $f \in Hom_\mathcal{D}(X,Y)$ and preserves composition.
\end{definition}

It is clear that any ordinary category may be treated as a semicategory. On the other hand, to any $\mathbb{C}$-semicategory $\mathcal{D}$, we can associate an ordinary $\mathbb{C}$-category
$\tilde{\mathcal{D}}$ by adjoining  unit morphisms as follows:
\begin{equation}\label{addnit}
\begin{array}{ll}
Ob(\tilde{\mathcal{D}}):&=Ob(\mathcal{D})\\
Hom_{\tilde{\mathcal{D}}}(X,Y):&=\left\{\begin{array}{ll} Hom_\mathcal{D}(X,X) \bigoplus \mathbb C &  \mbox{if $X=Y$}\\
Hom_\mathcal{D}(X,Y) & \mbox{if $X \neq Y$} \\ \end{array}\right.
\end{array}
\end{equation}

\begin{definition} (see \cite[Definition 5.2]{BB1})
A differential graded semicategory (DG-semicategory) $(\mathcal{S},\hat\partial)$ is a $\mathbb{C}$-semicategory $\mathcal{S}$ such that
\begin{itemize}
\item[(i)] $Hom^\bullet_\mathcal{S}(X,Y)=\big(Hom^n_\mathcal{S}(X,Y),\hat\partial^n_{XY}\big)_{n \geq 0}$ is a cochain complex of $\mathbb C$-vector spaces for each $X,Y \in Ob(\mathcal{S})$.

\item[(ii)] the composition map
$$Hom^\bullet_\mathcal{S}(Y,Z) \otimes Hom^\bullet_\mathcal{S}(X,Y) \longrightarrow Hom^\bullet_\mathcal{S}(X,Z)$$
is a morphism of complexes. Equivalently,
\begin{equation}
\hat\partial^n_{XZ}(g\circ f)=\hat\partial^{n-r}_{YZ}(g)\circ f+(-1)^{n-r}g\circ \hat\partial^{r}_{XY}(f) \label{comp}
\end{equation}
for any $f \in Hom_\mathcal{S}^r(X,Y)$ and $g \in Hom_\mathcal{S}^{n-r}(Y,Z)$.
\end{itemize} We will often drop the subscript and simply write $\hat\partial^\bullet$ for the differential
on any $Hom^\bullet_{\mathcal S}(X,Y)$. 

\smallskip
A DG-semifunctor $\alpha:(\mathcal{S},\hat\partial)\longrightarrow (\mathcal{S}',\hat\partial')$    is a $\mathbb C$-linear semifunctor $\alpha:\mathcal{S} \longrightarrow \mathcal{S}'$ such that the induced map $Hom^\bullet_\mathcal{S}(X,Y) \longrightarrow Hom^\bullet_{\mathcal{S}'}(\alpha X,\alpha Y)$, $f \mapsto \alpha (f)$, is a morphism of complexes for each $X,Y \in Ob(\mathcal{S})$. 
\end{definition}

If $(\mathcal S,\hat\partial)$ is a DG-semicategory, it is clear that $\mathcal S^0$ is an ordinary semicategory. 
We now recall the construction of the universal DG-semicategory $\Omega \mathcal C$ associated to a small $\mathbb{C}$-category $\mathcal{C}$ (see, \cite[Proposition 5.5]{BB1}):
\begin{equation}\label{xudga} 
\begin{array}{c}
Ob(\Omega\mathcal{C}):=Ob(\mathcal{C})\qquad Hom^\bullet_{\Omega \mathcal C}(X,Y)=\left(Hom^n_{\Omega \mathcal C}(X,Y),\partial^n_{XY}\right)_{n\geq 0}\\
Hom^n_{\Omega\mathcal{C}}(X,Y):=\underset{(X_1,...,X_n)\in Ob(\mathcal C)^n}{\bigoplus}Hom_{\tilde{\mathcal C}}(X_1,Y)\otimes Hom_{\mathcal C}
(X_2,X_1)\otimes \dots \otimes Hom_{\mathcal C}(X,X_n)
\end{array}
\end{equation}  An element of the form $\tilde{f}^0\otimes f^1\otimes ...\otimes f^n$ in $Hom^n_{\Omega\mathcal{C}}(X,Y)$ will be denoted by $\tilde{f}^0df^1 \ldots df^n$. Using 
\eqref{addnit}, a morphism 
$\tilde{f}^0$ in $Hom_{\tilde{\mathcal C}}(X_1,Y)$ will frequently be written as $f^0+\mu$, where $\mu\in \mathbb C$. 
The composition in $\Omega\mathcal{C}$ is determined by
\begin{equation}\label{compoDG}
 \begin{array}{l}
((f^0+\mu) df^1\dots df^n)\circ ((f^{n+1}+\mu')df^{n+2}\dots df^m)\\=\sum\limits_{j=1}^n (-1)^{n-j} (f^0+\mu)df^1\dots d(f^jf^{j+1}) \dots df^m + (-1)^n (f^0+\mu)f^1df^2 \dots df^m\hspace{1.4in}\\
\textrm{ }+\mu' (f^0+\mu)df^1\dots df^ndf^{n+2}\dots df^m\\
\end{array}
\end{equation}
 In particular, we obtain
\begin{equation}\label{symb}
 f^0df^1\dots df^n=f^0\circ df^1\circ \dots \circ df^n \qquad d(f^0f^1)=(df^0)f^1+f^0(df^1) 
\end{equation}
For each $X,Y \in Ob(\Omega\mathcal{C})$,  the differential $\partial^n_{XY}:Hom^n_{\Omega\mathcal{C}}(X,Y) \longrightarrow Hom^{n+1}_{\Omega\mathcal{C}}(X,Y)$ is given by 
\begin{equation*}\partial^n_{XY}((f^0+\mu)df^1 \ldots df^n):=df^0df^1 \ldots df^n\end{equation*}

\begin{definition}(see \cite[Definition 5.8]{BB1})
Let $(\mathcal S,\hat\partial)$ be a DG-semicategory. A closed graded trace of dimension $n$ on $\mathcal S$ is a collection of $\mathbb{C}$-linear maps $\hat T:=\{\hat T_X: Hom^n_\mathcal{S}(X,X) \longrightarrow \mathbb{C}\}_{X \in Ob(\mathcal{S})}$ satisfying the following two conditions
\begin{align}
&\hat T_X\left(\hat\partial^{n-1}(f)\right)=0 \label{closed}\\
&\hat T_X(gg')=(-1)^{ij}~ \hat T_Y(g'g)\label{trace}
\end{align}
for all $f \in Hom^{n-1}_{\mathcal S}(X,X)$, $g \in Hom^i_{\mathcal S}(Y,X)$, $g' \in Hom^j_{\mathcal S}(X,Y)$ and $i+j=n$.
\end{definition}

\begin{prop} \label{one-one}  (see \cite[Proposition 5.13]{BB1}) Let $\mathcal C$ be a small $\mathbb C$-category and let $T$ be a closed graded trace of dimension
$n$ on the  universal DG-semicategory $\Omega \mathcal{C}$. Let $CN^\bullet(\mathcal C)=\{Hom(CN_n(\mathcal C),\mathbb C)\}_{n\geq 0}$
be the dual of the cyclic nerve of $\mathcal C$. Define $\phi\in CN^n(\mathcal C)$ by setting
\begin{equation}\label{assco}
\phi(f^0 \otimes f^1 \otimes \ldots \otimes f^n):={T}_{X_0}(f^0df^1 \ldots df^n)
\end{equation}
for any $f^0 \otimes f^1 \otimes \ldots \otimes f^n  \in Hom_\mathcal{C}(X_1,X_0) \otimes Hom_\mathcal{C}(X_2,X_1) \otimes \ldots \otimes Hom_\mathcal{C} (X_0,X_n)$. Then,
$\phi$ is a cyclic cocycle over $\mathcal C$ and the association in \eqref{assco} determines a one-one correspondence between $n$-cyclic cocycles over $\mathcal{C}$ and $n$-dimensional closed graded traces on $\Omega \mathcal{C}$.
\end{prop}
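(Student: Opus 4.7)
The plan is to establish the correspondence in both directions and then verify the two constructions are mutually inverse. I will denote by $b$ the Hochschild differential on $CN^\bullet(\mathcal C)$ and by $\lambda$ the signed cyclic operator as in the notation section.

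\textbf{From closed graded traces to cyclic cocycles.} Given a closed graded trace $T$ of dimension $n$ on $\Omega\mathcal C$, define $\phi$ by \eqref{assco}. To check $b\phi = 0$, expand
\[
b\phi(f^0\otimes\cdots\otimes f^{n+1}) = \sum_{i=0}^{n}(-1)^i T_{X_0}\!\bigl(f^0 df^1\cdots d(f^if^{i+1})\cdots df^{n+1}\bigr) + (-1)^{n+1}T_{X_0}\!\bigl(f^{n+1}f^0\, df^1\cdots df^n\bigr),
\]
and apply the graded Leibniz rule \eqref{symb} at each $d(f^if^{i+1})$; this produces a telescoping sum whose surviving terms are $T_{X_0}(f^0f^1\, df^2\cdots df^{n+1})$ and $(-1)^n T_{X_0}(f^0 df^1\cdots df^n\, f^{n+1})$, the latter combining with the $i=n+1$ piece by the graded trace property \eqref{trace} applied to $f^{n+1}$ (degree $0$) and $f^0 df^1\cdots df^n$ (degree $n$). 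Cyclicity $\lambda\phi = \phi$ is obtained by moving $f^n$ past $f^0 df^1\cdots df^{n-1}$ via \eqref{trace}, then rewriting $df^{n-1}\cdot f^n = d(f^{n-1}f^n) - f^{n-1}df^n$ via \eqref{symb} and killing the exact piece by closedness \eqref{closed}.

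\textbf{From cyclic cocycles to closed graded traces.} Given $\phi \in Z^n_\lambda(\mathcal C)$, decompose an arbitrary element of $Hom^n_{\Omega\mathcal C}(X,X)$ via \eqref{xudga} as a sum of terms $(f^0+\mu)df^1\cdots df^n$ and set
\[
T_X\!\bigl((f^0+\mu)df^1\cdots df^n\bigr) := \phi(f^0\otimes f^1\otimes\cdots\otimes f^n),
\]
ignoring the scalar piece (this is forced, since on any element of the form $df^1\cdots df^n = \partial(f^1df^2\cdots df^n)$ closedness demands the value $0$). Well-definedness follows because the decomposition in \eqref{xudga} is a direct sum. Closedness of $T$ is immediate from $\partial((f^0+\mu)df^1\cdots df^{n-1}) = df^0df^1\cdots df^{n-1}$, which maps to the value assigned to the $\mu$-piece, namely $0$.

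\textbf{The main difficulty: the graded trace identity.} The harder verification is \eqref{trace} for $T$. Writing $g = (f^0+\mu)df^1\cdots df^i$ and $g' = (f^{i+1}+\mu')df^{i+2}\cdots df^n$, one expands both $gg'$ and $g'g$ using the composition rule \eqref{compoDG} into explicit sums indexed by ``where the composition is inserted''. After applying $T$ via the defining formula, each summand in $T(gg') - (-1)^{i(n-i)} T(g'g)$ becomes a value of $\phi$ on a tensor obtained from $f^0\otimes f^1\otimes\cdots\otimes f^n$ by composing adjacent factors and cyclically permuting. The key identity to verify is that this combination is precisely a signed sum of the form $(b\phi)(\cdots) + ((1-\lambda)\phi)(\cdots)$ applied to an $(n+1)$-tuple built from the $f^j$, so that it vanishes because $\phi$ is a cyclic cocycle. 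This is the combinatorial heart of the proof; the scalar $\mu,\mu'$ cases reduce to the closedness identity already established. Finally, the two constructions are mutually inverse essentially by definition: starting from $\phi$, the trace $T$ produces the value $T_X(f^0df^1\cdots df^n) = \phi(f^0\otimes\cdots\otimes f^n)$, and starting from $T$, the induced trace agrees with $T$ on all generators of $Hom^n_{\Omega\mathcal C}(X,X)$, hence on the whole hom-space by linearity.
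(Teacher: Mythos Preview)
The paper does not actually prove this proposition; it is quoted verbatim from \cite[Proposition~5.13]{BB1} and no argument is given here. So there is nothing in the present paper to compare your proposal against.

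That said, your outline is structurally sound but has one genuine gap and one acknowledged omission. In your cyclicity argument you move the degree-$0$ element $f^n$ past $f^0df^1\cdots df^{n-1}$ and then apply Leibniz once to $df^{n-1}\cdot f^n$; this only yields
\[
T_{X_n}(f^n\,df^0\cdots df^{n-1}) \;=\; -\,T_{X_n}\bigl(df^0\cdots df^{n-2}\,f^{n-1}\,df^n\bigr),
\]
which is not yet $(-1)^n T_{X_0}(f^0df^1\cdots df^n)$. You would need to iterate this step $n$ times, picking up a sign at each stage. A cleaner one-step route is to split instead as $g=f^0df^1\cdots df^{n-1}$ (degree $n-1$) and $g'=df^n$ (degree $1$), apply \eqref{trace} to get $(-1)^{n-1}T(df^n\cdot f^0df^1\cdots df^{n-1})$, rewrite $df^n\cdot f^0 = d(f^nf^0)-f^n\,df^0$, and kill the exact piece with \eqref{closed}; this gives $\lambda\phi=\phi$ directly.

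Secondly, in the reverse direction you explicitly flag the graded-trace identity \eqref{trace} as ``the combinatorial heart'' and assert it reduces to values of $b\phi$ and $(1-\lambda)\phi$, but you do not carry out this reduction. For a self-contained proof this computation must actually be done: expanding $gg'$ and $g'g$ via \eqref{compoDG} and matching terms against the Hochschild and cyclic relations is nontrivial bookkeeping, and simply stating the expected shape of the answer is not sufficient.
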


\begin{definition}\label{defsrc} (see \cite[Definition 5.10]{BB1})
 Let $\mathcal{C}$ be a small $\mathbb{C}$-category. An $n$-dimensional cycle $(\mathcal{S},\hat{\partial},\hat{T},\rho)$ over $\mathcal C$ consists of the following data:
\begin{itemize}
\item[(i)] A DG-semicategory $(\mathcal S,\hat\partial)$ where $\mathcal S^0$ is an ordinary category. 
\item[(ii)]  An $n$-dimensional closed graded trace $\hat{T}$  on $\mathcal{S}$.
\item[(iii)] A $\mathbb C$-linear semifunctor $\rho:\mathcal C \longrightarrow \mathcal{S}^0$.
\end{itemize} The character $\phi\in CN^n(\mathcal C)$ of $(\mathcal{S},\hat{\partial},\hat{T},\rho)$  is defined by setting
\begin{equation*}
\phi(f^0 \otimes f^1 \otimes \ldots \otimes f^n)={T}_{X_0}(\rho(f^0)\hat\partial(\rho(f^1)) \ldots \hat\partial (\rho(f^n)))
\end{equation*} for an element  $f^0 \otimes f^1 \otimes \ldots \otimes f^n  \in Hom_\mathcal{C}(X_1,X_0) \otimes Hom_\mathcal{C}(X_2,X_1) \otimes \ldots \otimes Hom_\mathcal{C} (X_0,X_n)$ in $CN_n(\mathcal C)$.

\end{definition}

Let $(\mathscr{H},\mathcal F)$ be a pair that satisfies conditions (1) and (2) in Definition \ref{dD3.1}. We define a graded-semicategory
$\Omega'\mathcal C=\Omega_{(\mathscr H,\mathcal F)}\mathcal C$ as follows: we put $Ob(\Omega'\mathcal C):=Ob(\mathcal C)$ and for any $X$, $Y\in \mathcal C$, $j\geq 0$, we set
$Hom^j_{\Omega'{\mathcal C}}(X,Y)$ to be the linear span in $\mathcal B(\mathscr H(X),\mathscr H(Y))$ of the operators
\begin{equation}
\mathscr H(\tilde{f}^0)[\mathcal F,f^1][\mathcal F,f^2] \ldots [\mathcal F,f^{j}]
\end{equation} where  $\tilde{f}^0\otimes f^1\otimes ...\otimes f^j$ is a homogeneous element of degree $j$ in $Hom_{\Omega\mathcal{C}}(X,Y)$. Here, we write
$\mathscr H(\tilde{f}^0)=\mathscr H({f}^0)+\mu \cdot id$, where $\tilde{f}^0=f^0+\mu$.  Using the fact that
\begin{equation*}
[\mathcal F,f]\mathscr H(f')=[\mathcal F,f\circ f']-\mathscr H(f)[\mathcal F,f']
\end{equation*} for composable morphisms $f$, $f'$ in $\mathcal C$, we observe that $\Omega'\mathcal C$ is closed under composition.
We set 
\begin{equation*}
\begin{array}{c}
\partial':=[\mathcal{F},-]:   \mathcal{B}\left(\mathscr{H}(X),\mathscr{H}(Y)\right) \longrightarrow \mathcal{B}\left(\mathscr{H}(X),\mathscr{H}(Y)\right)\\
\partial' T=[\mathcal{F},T]=\mathcal{F}_Y \circ T -(-1)^{|T|} T\circ \mathcal{F}_X\\
\end{array}
\end{equation*} We now have the following Lemma. 

\begin{lemma} \label{srLe1} Let $(\mathscr{H},\mathcal F)$ be a pair that satisfies conditions (1) and (2) in Definition \ref{dD3.1}. Then, 

\smallskip
(a) $(\Omega'\mathcal C,\partial')$  is a DG-semicategory and $\Omega'^0\mathcal C$ is an ordinary category. 

\smallskip
(b) There is a canonical semifunctor $\rho'=\rho_{\mathscr H}:\mathcal C\longrightarrow \Omega'^0\mathcal C$ which is identity on objects and takes any $f\in Hom_{\mathcal C}(X,Y)$
to $\mathscr H(f)\in \mathcal B(\mathscr H(X),\mathscr H(Y))$. This extends to  a unique DG-semifunctor  $\hat{\rho}'=\hat{\rho}_{\mathscr H}:(\Omega\mathcal C,\partial)\longrightarrow (\Omega'\mathcal C,\partial')$  such that the restriction of  $\hat{\rho}'$  to $\mathcal C$ is identical to $\rho'$.

\smallskip
(c) Suppose that $(\mathscr{H},\mathcal F)$ is  a $p$-summable  even  Fredholm module. Choose $n\geq p-1$. Then, for $X$, $Y\in Ob(\mathcal C)$ and $k\geq 0$, we have $Hom^k_{\Omega'{\mathcal C}}(X,Y)
\subseteq \mathcal B^{(n+1)/k}(\mathscr H(X),\mathscr H(Y))$. 
\end{lemma}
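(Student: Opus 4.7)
The plan is to handle the three parts in sequence: (a) follows from direct algebraic identities for the graded commutator $[\mathcal F, -]$; (b) invokes the universal property of $\Omega\mathcal C$ to extend $\rho'$; and (c) reduces to Hölder's inequality for Schatten classes.

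For (a), the two DG-semicategory axioms reduce to standard identities. To see $\partial'^2 = 0$, I would expand $[\mathcal F, [\mathcal F, T]]$ for homogeneous $T$ and use $\mathcal F_X^2 = id_{\mathscr H(X)}$; the cross terms cancel and the only surviving contributions are $\mathcal F_Y^2 T - T\mathcal F_X^2 = 0$. The graded Leibniz rule for $\partial'$ is a routine identity for graded commutators with a fixed operator. Closure of $\Omega'\mathcal C$ under composition is already noted in the excerpt via the relation $[\mathcal F, f]\mathscr H(f') = [\mathcal F, f\circ f'] - \mathscr H(f)[\mathcal F, f']$. Finally, $Hom^0_{\Omega'\mathcal C}(X,X)$ contains $id_{\mathscr H(X)}$ coming from the scalar $1\in\mathbb C$ adjoined in $\tilde{\mathcal C}$, so $\Omega'^0\mathcal C$ has identity morphisms and is an ordinary category.

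For (b), $\rho'$ is visibly a $\mathbb C$-linear semifunctor since $\mathscr H$ is itself a functor. The extension to $\hat\rho'$ is then essentially forced by the universal property of $\Omega\mathcal C$ (Proposition 5.5 of [BB1]): any $\mathbb C$-linear semifunctor $\mathcal C\to\mathcal S^0$ lifts uniquely to a DG-semifunctor $(\Omega\mathcal C,\partial)\to(\mathcal S,\hat\partial)$. Concretely, the extension is forced to send $\tilde f^0\,df^1\ldots df^j$ to $\mathscr H(\tilde f^0)[\mathcal F,f^1]\ldots[\mathcal F,f^j]$, which lies in $\Omega'\mathcal C$ by construction. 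Checking that $\hat\rho'$ intertwines the differentials is a short computation with the Leibniz rule from (a): the terms of the form $\cdots[\mathcal F,[\mathcal F, f^i]]\cdots$ vanish by $\partial'^2=0$ applied to each $\mathscr H(f^i)$, and the remaining term is $[\mathcal F, f^0][\mathcal F, f^1]\ldots[\mathcal F, f^j]$, matching $\hat\rho'(df^0\,df^1\ldots df^j)$; the scalar piece $\mu\cdot id$ contributes nothing since it is only present when source and target coincide. Preservation of composition reduces to the Leibniz identity $[\mathcal F, fg]=[\mathcal F, f]g + f[\mathcal F, g]$; this is the step I expect to be the most notationally involved, since one must expand the composition formula (2.7) and compare with the corresponding telescoping identity in $\Omega'\mathcal C$, but it is conceptually straightforward.

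For (c), I would appeal to Hölder's inequality for Schatten classes: for operators $T_i\in\mathcal B^{p_i}(\mathscr H(X_{i-1}),\mathscr H(X_i))$, the composite $T_1\circ\cdots\circ T_k$ lies in $\mathcal B^r$ with $1/r=\sum_i 1/p_i$. By the $p$-summability hypothesis, each $[\mathcal F, f^i]$ lies in $\mathcal B^p$, so any $k$-fold product of such commutators lies in $\mathcal B^{p/k}$. The hypothesis $n\geq p-1$ gives $(n+1)/k\geq p/k$, and hence the inclusion $\mathcal B^{p/k}\subseteq\mathcal B^{(n+1)/k}$ yields the desired containment for the commutator product. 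Left multiplication by the bounded operator $\mathscr H(\tilde f^0)$ preserves this Schatten class by the ideal property recorded in the excerpt. Since $Hom^k_{\Omega'\mathcal C}(X,Y)$ is the $\mathbb C$-linear span of these monomials, the stated inclusion follows.
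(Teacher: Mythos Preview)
Your proposal is correct and follows essentially the same route as the paper: part (a) is handled by the direct computation of $\partial'^2$ and the graded Leibniz identity (the paper defers the latter to \cite[Proposition 5.5(i)]{BB1}); part (b) is deduced from the universal property of $\Omega\mathcal C$ in \cite[Proposition 5.5]{BB1}; and part (c) is exactly H\"older's inequality combined with the $p$-summability hypothesis and the ideal property of Schatten classes. Your argument is in fact more explicit than the paper's in (b), where the authors simply invoke the universal property without spelling out the concrete form of $\hat\rho'$ or the verification that it intertwines differentials.
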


\begin{proof}Since each $\mathcal{F}_X$ is a degree 1 operator and $\mathcal F_Y[\mathcal F,f]=-[\mathcal F,f]\mathcal F_X$ for any 
$f\in Hom_{\mathcal C}(X,Y)$, we have $\partial'\left(Hom^j_{\Omega' \mathcal{C}}(X,Y)\right) \subseteq Hom^{j+1}_{\Omega' \mathcal{C}}(X,Y)$.
We now check that $\partial'^2=0$. For any homogeneous element $\mathscr H(\tilde{f}^0)[\mathcal F,f^1][\mathcal F,f^2] \ldots [\mathcal F,f^{j}]$ of degree $j$, we have
\begin{equation*}
\begin{array}{ll}
\partial'^2\left(\mathscr H(\tilde{f}^0)[\mathcal F,f^1][\mathcal F,f^2] \ldots [\mathcal F,f^{j}]\right)&=\partial'\left(\mathcal F_Y\circ \left(\mathscr H(\tilde{f}^0)[\mathcal F,f^1][\mathcal F,f^2] \ldots [\mathcal F,f^{j}]\right)\right)\\ 
&\textrm{ }-(-1)^j \partial' \left(\left(\mathscr H(\tilde{f}^0)[\mathcal F,f^1][\mathcal F,f^2] \ldots [\mathcal F,f^{j}]\right)\circ\mathcal F_X\right)\\
& =\mathcal F_Y^2 \circ \mathscr H(\tilde{f}^0)[\mathcal F,f^1][\mathcal F,f^2] \ldots [\mathcal F,f^{j}]\\
& \quad -(-1)^{j+1} \mathcal{F}_Y \circ \mathscr H(\tilde{f}^0)[\mathcal F,f^1][\mathcal F,f^2] \ldots [\mathcal F,f^{j}]\circ\mathcal F_X \\
& \quad - (-1)^j \big( \mathcal{F}_Y \circ \mathscr H(\tilde{f}^0)[\mathcal F,f^1][\mathcal F,f^2] \ldots [\mathcal F,f^{j}]\circ\mathcal F_X \\
&\quad - (-1)^{j+1}  \mathscr H(\tilde{f}^0)[\mathcal F,f^1][\mathcal F,f^2] \ldots [\mathcal F,f^{j}]\circ\mathcal F_X^2      \big)\\
& =0
\end{array}
\end{equation*}
The fact that $\partial'$ is compatible with composition follows exactly as in the proof of \cite[Proposition 5.5 (i)]{BB1}. It is also easy to see that $\Omega'^0\mathcal C$ is an ordinary category. 

\smallskip
(b) This is immediate using the universal property in \cite[Proposition 5.5]{BB1}.

\smallskip
(c) This is a consequence of the H\"older's inequality and the condition \eqref{stneve} in Definition \ref{dD3.1}.
\end{proof}

For any $\mathbb Z_2$-graded Hilbert space $\mathcal H$, the grading operator on it will be denoted by $\epsilon_{\mathcal H}$ or simply $\epsilon$. 
For any $T \in \mathcal{B}(\mathscr H(X), \mathscr H(Y))$ such that $[\mathcal{F},T] \in \mathcal{B}^1(\mathscr H(X), \mathscr H(Y))$, we define
\begin{equation*}
Tr_s(T):=\frac{1}{2}~Tr\left(\epsilon \mathcal F_Y[\mathcal F,T]\right)=\frac{1}{2}~Tr\left(\epsilon \mathcal F_Y\partial'(T)\right)=\frac{1}{2}~ Tr\left(\epsilon\mathcal{F}_Y(\mathcal{F}_Y \circ T - (-1)^{|T|}~ T \circ \mathcal{F}_X)\right)
\end{equation*}

\begin{prop}\label{srLe2} Let $(\mathscr{H},\mathcal F)$ be  a $p$-summable  even Fredholm module over $\mathcal C$. Take $ 2m \geq p-1$. Then, the collection
\begin{equation}
\hat{Tr}_s=\{Tr_s:Hom^{2m}_{\Omega'\mathcal{C}}(X,X) \longrightarrow \mathbb C\}_{X\in Ob(\mathcal C)}
\end{equation} defines a closed graded trace of dimension $2m$ on $(\Omega'\mathcal C,\partial')$. 

\end{prop}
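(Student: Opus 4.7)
The plan is to verify directly the two defining properties of a closed graded trace: closedness $\hat{T}\circ \hat\partial = 0$ and the graded cyclic identity. Well-definedness is immediate: for $T\in Hom^{2m}_{\Omega'\mathcal C}(X,X)$, Lemma \ref{srLe1}(c) applied with $n=2m$ gives $[\mathcal F,T]\in Hom^{2m+1}_{\Omega'\mathcal C}(X,X)\subseteq \mathcal B^1(\mathscr H(X))$, so $\epsilon_X\mathcal F_X[\mathcal F,T]$ is trace class and $Tr_s$ makes sense.

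For closedness, given $f\in Hom^{2m-1}_{\Omega'\mathcal C}(X,X)$, I would compute
$$Tr_s(\partial' f)=\tfrac{1}{2}Tr\bigl(\epsilon_X\mathcal F_X[\mathcal F,[\mathcal F,f]]\bigr)=\tfrac{1}{2}Tr\bigl(\epsilon_X\mathcal F_X\cdot \partial'^2 f\bigr)=0,$$
invoking $\partial'^2=0$ from Lemma \ref{srLe1}(a). This step is essentially free.

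For the graded trace identity, let $g\in Hom^i_{\Omega'\mathcal C}(Y,X)$ and $g'\in Hom^j_{\Omega'\mathcal C}(X,Y)$ with $i+j=2m$. I expand $[\mathcal F,gg']=[\mathcal F,g]g'+(-1)^i g[\mathcal F,g']$ by the graded Leibniz rule, giving
$$2Tr_s(gg')=Tr\bigl(\epsilon_X\mathcal F_X[\mathcal F,g]g'\bigr)+(-1)^i Tr\bigl(\epsilon_X\mathcal F_X g[\mathcal F,g']\bigr).$$
To migrate each summand to $\mathscr H(Y)$, I will use four tools: ordinary cyclicity of $Tr$ (valid as the products in question are trace class by H\"older), the anticommutation $\epsilon\mathcal F=-\mathcal F\epsilon$, the grading commutation $T\epsilon_X=(-1)^{|T|}\epsilon_Y T$ for homogeneous $T:\mathscr H(X)\to \mathscr H(Y)$, and the identity $\mathcal F_Y[\mathcal F,T]=-(-1)^{|T|}[\mathcal F,T]\mathcal F_X$, which is exactly $\partial'^2 T=0$ and was verified inside the proof of Lemma \ref{srLe1}(a). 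In the first summand I push $\mathcal F_X$ across $[\mathcal F,g]$ via this identity, cycle $g'$ to the front, slide $\epsilon_X$ past it via the grading commutation, and cycle $\mathcal F_Y$ back through $\epsilon_Y$; the accumulated signs collapse to $(-1)^{i+j}=1$. In the second summand I cycle $[\mathcal F,g']$ to the front, commute $\epsilon_X$ past it, and then use the same identity to pull $\mathcal F_X$ across $[\mathcal F,g']$. The outcome is
$$2Tr_s(gg')=Tr\bigl(\epsilon_Y\mathcal F_Y g'[\mathcal F,g]\bigr)+(-1)^i Tr\bigl(\epsilon_Y\mathcal F_Y [\mathcal F,g']g\bigr).$$
A parallel Leibniz expansion directly on $\mathscr H(Y)$ gives
$$2Tr_s(g'g)=Tr\bigl(\epsilon_Y\mathcal F_Y[\mathcal F,g']g\bigr)+(-1)^j Tr\bigl(\epsilon_Y\mathcal F_Y g'[\mathcal F,g]\bigr).$$
Multiplying the second line by $(-1)^{ij}$ and matching coefficients, the identity $Tr_s(gg')=(-1)^{ij}Tr_s(g'g)$ reduces to checking $(-1)^i=(-1)^{ij}$ and $1=(-1)^{j(i+1)}$; both hold because $i+j=2m$ forces $i$ and $j$ to have the same parity.

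The main obstacle is sign bookkeeping. Unlike the single-algebra setting, the operators $g$, $g'$, $[\mathcal F,g]$, $[\mathcal F,g']$ are morphisms between two distinct Hilbert spaces $\mathscr H(X)$ and $\mathscr H(Y)$, so I cannot appeal to the usual graded cyclicity of the supertrace on a single graded space. Instead, I must combine ordinary cyclicity with the explicit commutation relations above, carefully tracking each sign as $\epsilon$ and $\mathcal F$ are moved through the product. The parity constraint $i+j\in 2\mathbb Z$ is precisely what makes the two expressions above for $2Tr_s(gg')$ and $(-1)^{ij}\cdot 2Tr_s(g'g)$ coincide term by term.
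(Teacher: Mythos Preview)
Your proof is correct and follows essentially the same route as the paper: well-definedness via Lemma~\ref{srLe1}(c), closedness via $\partial'^2=0$, and the graded trace identity via the Leibniz expansion of $[\mathcal F,gg']$ combined with ordinary trace cyclicity, the commutation rules for $\epsilon$ and $\mathcal F$, and the parity observation $i\equiv j\pmod 2$. The only cosmetic difference is that the paper packages your separate commutation relations into the single identity $\epsilon\mathcal F_Y\,\partial'(T)=\partial'(T)\,\epsilon\mathcal F_X$ (equation~\eqref{sre3.4}), which lets it move $\epsilon\mathcal F$ through $\partial'(T)$ in one step and shortens the sign bookkeeping.
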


\begin{proof} From Lemma \ref{srLe1}(a), it is clear that for any $T\in Hom^{2m}_{\Omega'\mathcal{C}}(X,X)$, we have $[\mathcal F,T]\in Hom^{2m+1}_{\Omega'\mathcal{C}}(X,X)$. Applying Lemma \ref{srLe1}(c), it follows that $[\mathcal F,T]\in  \mathcal B^1(\mathscr H(X),\mathscr H(X))$. Accordingly, each of the maps $Tr_s:Hom^{2m}_{\Omega'\mathcal{C}}(X,X) \longrightarrow \mathbb C$ is well-defined.

\smallskip
For $T'\in Hom^{2m-1}_{\Omega'\mathcal{C}}(X,X)$, we notice that
\begin{equation*}
Tr_s(\partial'T')=\frac{1}{2}~Tr\left(\epsilon \mathcal F_X(\partial'^2T')\right)=0
\end{equation*} We now consider $T_1\in Hom^{i}_{\Omega'\mathcal{C}}(X,Y)$, $T_2\in Hom^{j}_{\Omega'\mathcal{C}}(Y,X)$ such that $i+j=2m$. We notice that
\begin{equation}\label{sre3.4}
\begin{array}{c}
\epsilon \mathcal F_Y\partial'(T_1)=\partial'(T_1)\epsilon \mathcal F_X \qquad \epsilon \mathcal F_X\partial'(T_2)=\partial'(T_2)\epsilon \mathcal F_Y
\end{array}
\end{equation} We note that $i\equiv j \mbox{(mod $2$)}$. Using \eqref{sre3.4} and \eqref{commutr}, we now have
\begin{equation*}
\begin{array}{ll}
2\cdot Tr_s(T_1T_2)=Tr\left(\epsilon \mathcal F_Y\partial'(T_1T_2)\right) & = Tr\left(\epsilon \mathcal F_Y\partial'(T_1)T_2\right)+(-1)^iTr\left(\epsilon \mathcal F_YT_1\partial'(T_2)\right)\\
& = Tr\left(\partial'(T_1)\epsilon \mathcal F_XT_2\right)+(-1)^iTr\left(\partial'(T_2)\epsilon \mathcal F_YT_1\right)\\
& =Tr\left(\epsilon \mathcal F_XT_2\partial'(T_1)\right)+(-1)^iTr\left(\epsilon \mathcal F_X\partial'(T_2)T_1\right)\\
& =  Tr\left(\epsilon \mathcal F_XT_2\partial'(T_1)\right)+(-1)^jTr\left(\epsilon \mathcal F_X\partial'(T_2)T_1\right)\\
&= (-1)^{ij}2\cdot Tr_s(T_2T_1)
\end{array}
\end{equation*}
\end{proof}

\begin{theorem}\label{evencyc}
Let $(\mathscr{H},\mathcal F)$ be  a $p$-summable  even Fredholm module over $\mathcal C$. Take $2m \geq p-1$.  Then, the tuple
$(\Omega'\mathcal C,\partial',\hat{Tr}_s,\rho')$ defines a $2m$-dimensional cycle over $\mathcal C$. Then, $\phi^{2m} \in CN^{2m}(\mathcal{C})$ defined by
\begin{equation*}
\phi^{2m}(f^0 \otimes f^1 \otimes \ldots \otimes f^{2m}):= Tr_s\left(\mathscr H({f}^0)[\mathcal F,f^1][\mathcal F,f^2] \ldots [\mathcal F,f^{2m}]\right)
\end{equation*}
for any $f^0 \otimes f^1 \otimes \ldots \otimes f^{2m} \in Hom_{\mathcal{C}}(X_1,X) \otimes Hom_{\mathcal{C}}(X_2,X_1) \otimes \ldots \otimes Hom_{\mathcal{C}}(X,X_{2m})$ is a cyclic cocycle over $\mathcal C$.

\end{theorem}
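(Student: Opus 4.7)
The plan is twofold: first, verify that $(\Omega'\mathcal C, \partial', \hat{Tr}_s, \rho')$ fits Definition~\ref{defsrc} by assembling earlier results; second, deduce that $\phi^{2m}$ is a cyclic cocycle by pulling the trace back to the universal DG-semicategory $\Omega\mathcal C$ and invoking Proposition~\ref{one-one}.

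For the first part there is essentially nothing new to prove. The DG-semicategory $(\Omega'\mathcal C, \partial')$ with $\Omega'^0\mathcal C$ an ordinary category is Lemma~\ref{srLe1}(a); the closed graded trace $\hat{Tr}_s$ of dimension $2m$ is Proposition~\ref{srLe2}; and the $\mathbb C$-linear semifunctor $\rho': \mathcal C \longrightarrow \Omega'^0\mathcal C$ is Lemma~\ref{srLe1}(b). Bundling these together yields a $2m$-dimensional cycle over $\mathcal C$ in the sense of Definition~\ref{defsrc}.

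For the cocycle assertion, I would use the DG-semifunctor $\hat\rho': (\Omega\mathcal C, \partial) \longrightarrow (\Omega'\mathcal C, \partial')$ of Lemma~\ref{srLe1}(b) to transfer $\hat{Tr}_s$ back to the universal side. Concretely, define $T = \{T_X\}_{X\in Ob(\mathcal C)}$ by
$$T_X(\omega) := Tr_s\bigl(\hat\rho'(\omega)\bigr), \qquad \omega \in Hom^{2m}_{\Omega\mathcal C}(X,X).$$
Because $\hat\rho'$ intertwines $\partial$ with $\partial'$ and preserves composition, both closedness $T_X(\partial \eta) = Tr_s(\partial'\hat\rho'(\eta)) = 0$ and the graded-trace identity $T_X(gg') = (-1)^{ij} T_Y(g'g)$ (for $g$, $g'$ of degrees $i+j=2m$) transfer immediately from the corresponding properties of $\hat{Tr}_s$ established in Proposition~\ref{srLe2}. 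Hence $T$ is a $2m$-dimensional closed graded trace on $\Omega\mathcal C$, and Proposition~\ref{one-one} then delivers the cyclic cocycle property of its associated element via \eqref{assco}.

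It remains to match this associated cocycle with $\phi^{2m}$. By the universality of $\hat\rho'$ and the identities \eqref{symb}, one computes
$$\hat\rho'(f^0 df^1 \cdots df^{2m}) = \rho'(f^0) \circ \partial'\rho'(f^1) \circ \cdots \circ \partial'\rho'(f^{2m}) = \mathscr H(f^0)[\mathcal F, f^1]\cdots[\mathcal F, f^{2m}],$$
so \eqref{assco} applied to $T$ reproduces precisely the defining formula for $\phi^{2m}$, completing the argument. There is no genuine obstacle here: all the analytic content, namely the Schatten-class estimate ensuring $Tr_s$ is defined on degree-$2m$ morphisms, the closedness of $\hat{Tr}_s$, and the graded-trace identity, has already been absorbed into Lemma~\ref{srLe1} and Proposition~\ref{srLe2}; what remains is a clean bookkeeping argument combining those results with Proposition~\ref{one-one}.
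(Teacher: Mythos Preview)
Your proposal is correct and follows essentially the same route as the paper: both assemble Lemma~\ref{srLe1} and Proposition~\ref{srLe2} to obtain the cycle, and both then read off the cyclic cocycle from the character machinery of \cite{BB1}. The only cosmetic difference is that the paper invokes \cite[Theorem 5.11]{BB1} directly on the character of the cycle, whereas you explicitly pull back along $\hat\rho'$ to $\Omega\mathcal C$ and then appeal to Proposition~\ref{one-one}; these amount to the same argument.
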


\begin{proof} It  follows directly from Lemma \ref{srLe1} and Proposition \ref{srLe2} that  $(\Omega'\mathcal C,\partial',\hat{Tr}_s,\rho')$ is a $2m$-dimensional cycle over $\mathcal C$. 
The rest follows from the definition of the character of a cycle in Definition \ref{defsrc} and the result of \cite[Theorem 5.11]{BB1}. 
\end{proof}

We will refer to $\phi^{2m}$ as the $2m$-dimensional character associated with the $p$-summable  even Fredholm module $(\mathscr{H},\mathcal F)$ over the category $\mathcal{C}$.

\begin{remark} 
The appearance of only even cyclic cocycles in Theorem \ref{evencyc} is due to the following fact from \cite[Lemma 2 a)]{C2}: if $T \in \mathcal{B}(\mathscr H(X), \mathscr H(X))$ is homogeneous of odd degree, then $Tr_s(T)=0$.
\end{remark}

\section{Periodicity of Chern character for even Fredholm modules}
Let $\mathcal{C}$ be a small $\mathbb{C}$-category and let $(\mathscr{H},\mathcal{F})$ be a $p$-summable even Fredholm module over $\mathcal{C}$. We take  $2m \geq p-1$. Let $\phi^{2m}$ be the $2m$-dimensional character associated to the Fredholm module $(\mathscr{H},\mathcal F)$.

\smallskip Since $\mathcal{B}^p(\mathscr H(X), \mathscr H(Y)) \subseteq \mathcal{B}^q(\mathscr H(X), \mathscr H(Y))$ for any $p \leq q$, the Fredholm module $(\mathscr{H},\mathcal{F})$ is also $(p+2)$-summable. Using Theorem \ref{evencyc}, we then have the $(2m+2)$-dimensional character $\phi^{2m+2}$ associated to $(\mathscr{H},\mathcal{F})$.
We will show that the cyclic cocycles $\phi^{2m}$ and $\phi^{2m+2}$ are related to each other via the periodicity operator. 

\smallskip
For small $\mathbb{C}$-categories $\mathcal C$ and $\mathcal C'$,  we recall from \cite[Theorem 7.11]{BB1} that there is a pairing on cyclic cocycles
\begin{equation}\label{pairty}
Z^r_\lambda(\mathcal{C}) \otimes Z^s_\lambda(\mathcal{C}') \longrightarrow Z^{r+s}_\lambda(\mathcal{C} \otimes \mathcal{C}') \qquad \phi \otimes \phi' \mapsto \phi \# \phi'
\end{equation} which descends to a pairing on cyclic cohomologies:
\begin{equation}\label{pair}
H^r_\lambda(\mathcal{C}) \otimes H^s_\lambda(\mathcal{C}') \longrightarrow H^{r+s}_\lambda(\mathcal{C} \otimes \mathcal{C}') 
\end{equation}
given by
\begin{equation*}
{(\hat{T}^\phi \# \hat{T}^{\phi'})}_{(X,X')}(f \otimes f'):=\hat{T}^\phi_X( f_r)\hat{T}^{\phi'}_{X'}(f'_s)
\end{equation*}
for any $f \otimes f'= \underset{i+j=r+s}{\sum}(f_i \otimes f'_j)\in Hom^{r+s}_{\mathcal S \otimes  \mathcal S'}\left((X,X'),(X,X')\right)$. Here $\phi$ and $\phi'$ are expressed respectively as the characters of $r$ and $s$-dimensional cycles $(\mathcal{S},\hat\partial, \hat{T}^\phi,\rho)$ and $(\mathcal{S}',\hat\partial',\hat{T}^{\phi'},\rho')$ over $\mathcal{C}$ and $\mathcal{C}'$. In particular, $\phi \# \phi'$ is the character of  the $(r+s)$-dimensional cycle $\left(\mathcal{S} \otimes \mathcal{S}', \hat{\partial} \otimes \hat{\partial'}, \hat{T}^\phi\# \hat{T}^{\phi'}, \rho \otimes \rho' \right)$ over $\mathcal{C} \otimes \mathcal{C}'$. For a morphism $f$ in $\mathcal C$, we will often suppress the functor $\rho$ and write
the morphism $\rho(f)$ in $\mathcal S^0$ simply as $f$.  Similarly, when there is no danger of confusion, we will often write the morphism $\mathscr H(f)$ simply as $f$.

\smallskip
Now setting $\mathcal{C}'=\mathbb{C}$ (the category with one object) and considering the cyclic cocycle $\psi \in H^2_\lambda(\mathbb{C})$ determined by $\psi(1,1,1)=1$, we obtain the periodicity operator:
\begin{equation*} 
S:Z^r_\lambda(\mathcal{C}) \longrightarrow  Z^{r+2}_\lambda(\mathcal{C}) \qquad S(\phi):=\phi \# \psi 
\end{equation*}
for any $r \geq 0$ and $\phi \in Z^r_\lambda(\mathcal{C})$. 

\begin{lemma}\label{Sform} Let $\phi \in Z^r_\lambda(\mathcal{C})$.
For any $f^0 \otimes f^1 \otimes \ldots \otimes f^{r+2} \in CN_{r+2}(\mathcal{C})$, we have

\smallskip
$\begin{array}{ll}
(S(\phi))(f^0 \otimes f^1 \otimes \ldots \otimes f^{r+2})
&=\hat{T}^\phi_X(f^0f^1f^2\hat\partial f^3 \ldots \hat\partial f^{r+2})+\hat{T}^\phi_X(f^0\hat\partial f^1(f^2f^3) \ldots \hat\partial f^{r+2})+ \ldots \\
&\quad +\hat{T}^\phi_X(f^0\hat\partial f^1 \ldots \hat\partial f^{i-1} (f^if^{i+1})\hat\partial f^{i+2} \ldots \hat\partial f^{r+2}) + \ldots \\
& \quad +\hat{T}^\phi_X(f^0\hat\partial f^1 \ldots \hat\partial f^r (f^{r+1}f^{r+2}))
\end{array}$
\end{lemma}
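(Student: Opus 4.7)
The strategy is to unwind the definition $S(\phi):=\phi\#\psi$ via the character of the tensor-product cycle. By Proposition~\ref{one-one}, the $2$-cocycle $\psi\in Z^2_\lambda(\mathbb C)$ corresponds to a $2$-dimensional closed graded trace $\hat T^\psi$ on the universal DG-semicategory $\Omega\mathbb C$ with $\hat T^\psi_*(1\cdot d1\cdot d1)=1$. Expressing $\phi$ as the character of a cycle $(\mathcal S,\hat\partial,\hat T^\phi,\rho)$, the pairing \eqref{pair} shows that $S(\phi)$ is the character of the product cycle on $\mathcal C\otimes\mathbb C\cong\mathcal C$, so
\[
(S\phi)(f^0\otimes\cdots\otimes f^{r+2})=(\hat T^\phi\#\hat T^\psi)_{(X,*)}\Big((f^0\otimes 1)\,\hat\partial(f^1\otimes 1)\cdots\hat\partial(f^{r+2}\otimes 1)\Big).
\]
Since each $f^i$ is of degree $0$, the Leibniz rule on the tensor product gives $\hat\partial(f^i\otimes 1)=\hat\partial f^i\otimes 1+f^i\otimes d1$.

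Expanding the product yields a sum indexed by subsets $I\subseteq\{1,\ldots,r+2\}$, where $k\in I$ selects the factor $f^k\otimes d1$ and $k\notin I$ selects $\hat\partial f^k\otimes 1$. Because $\hat T^\phi\#\hat T^\psi$ pairs the degree-$r$ component of the $\mathcal S$-factor with the degree-$2$ component of the $\Omega\mathbb C$-factor, and each entry contributes degree $1$ to exactly one side, only subsets with $|I|=2$ can survive.

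For $I=\{i,j\}$ with $1\le i<j\le r+2$, the $\Omega\mathbb C$-component takes the shape $1\cdot 1^{i-1}\cdot d1\cdot 1^{j-i-1}\cdot d1\cdot 1^{r+2-j}$. A direct application of the composition rule~\eqref{compoDG} in $\Omega\mathbb C$ gives the key identities $1\cdot d1=d1$ and $d1\cdot 1=0$, while $(d1\cdot d1)\cdot 1=d1\cdot d1$. Consequently the $\Omega\mathbb C$-component vanishes whenever $j>i+1$, and when $j=i+1$ it reduces to $1\cdot d1\cdot d1$, on which $\hat T^\psi_*$ evaluates to $1$. The associated $\mathcal S$-component is then exactly $f^0\hat\partial f^1\cdots\hat\partial f^{i-1}(f^if^{i+1})\hat\partial f^{i+2}\cdots\hat\partial f^{r+2}$, which is the $i$-th summand in the stated formula.

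The remaining ingredient is the Koszul sign $(-1)^{s}$, $s=\sum_{k<l}|y_k||x_l|$, incurred when regrouping the tensor-product composition as a product in each factor. For $I=\{i,i+1\}$, an easy count gives $s=2(r+1-i)$, so every surviving contribution carries sign $+1$. Summing over $i=1,\ldots,r+1$ then produces exactly the $r+1$ terms in the statement. The most delicate step is the computation inside $\Omega\mathbb C$ showing that only adjacent $I$ contribute (namely verifying $d1\cdot 1=0$ but $d1\cdot d1\cdot 1=d1\cdot d1$ via \eqref{compoDG}); once that is in hand, the Koszul-sign bookkeeping is routine.
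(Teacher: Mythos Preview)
Your overall strategy is exactly the paper's: expand the character of the tensor cycle using $\hat\partial(f^i\otimes 1)=\hat\partial f^i\otimes 1+f^i\otimes d1$, isolate the terms of bidegree $(r,2)$, and simplify the $\Omega\mathbb C$-factor. The paper carries this out explicitly for $r=2$; you attempt the general case directly. The Koszul-sign bookkeeping and the identification of the $\mathcal S$-component are fine.

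There is, however, a genuine error in your ``most delicate step.'' In $\Omega\mathbb C$ (with $1\in\mathbb C$ the algebra unit, not the adjoined unit of $\tilde{\mathbb C}$) none of the three identities you state holds. From \eqref{compoDG} one computes $d1\cdot 1=d1-1\cdot d1\neq 0$, hence also $1\cdot d1\neq d1$; and $(d1)^2\cdot 1=1\cdot(d1)^2$, which is \emph{not} equal to $(d1)^2$ since $1\,da\,da$ and $e\,da\,da$ are distinct basis elements of $Hom^2_{\Omega\mathbb C}(\ast,\ast)$. The correct identities are precisely those the paper records:
\[
d1=(d1)\,1+1\,(d1),\qquad 1\,(d1)\,1=0,\qquad 1\,(d1)^2=(d1)^2\,1.
\]
With these, the argument you outline goes through unchanged: for $I=\{i,j\}$ with $j>i+1$ the $\Omega\mathbb C$-factor contains $1\,(d1)\,1$ and vanishes; for $j=i+1$ it equals $1^{i}(d1)^2\,1^{r+1-i}=1\,(d1)^2$, on which $\hat T^\psi$ takes the value $\psi(1,1,1)=1$. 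So your conclusion is right, but the justification as written is not; replace your three identities by the ones above and the proof is complete.
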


\begin{proof} We consider the $2$-dimensional trace $\hat{T}^\psi$ on the DG-semicategory $(\Omega\mathbb C,\partial )$ such that $\psi\in Z^2_\lambda(\mathbb C)$ is the character of the corresponding cycle over $\mathbb C$.  We first observe that we have the following equalities in $\Omega \mathbb{C}$:
\begin{equation*}
\partial 1= (\partial1)1+1 (\partial 1), \qquad 1(\partial1)1=0, \qquad 1(\partial 1)^2=(\partial1)^21
\end{equation*}

We illustrate the proof for $r=2$. The general case will follow similarly.  By definition, we have

\smallskip
$\begin{array}{ll}
&(S(\phi))(f^0 \otimes f^1 \otimes f^2 \otimes f^3 \otimes f^{4})\\
& \quad =(\phi \# \psi)(f^0 \otimes f^1 \otimes f^2 \otimes f^3 \otimes f^{4})\\
& \quad ={(\hat{T}^\phi \# \hat{T}^\psi)}\left( (f^0 \otimes 1)(\hat{\partial} \otimes \partial)(f^1 \otimes 1) (\hat{\partial} \otimes \partial)(f^2 \otimes 1)   (\hat{\partial} \otimes \partial)(f^3 \otimes 1) (\hat{\partial} \otimes \partial)(f^{4} \otimes 1) \right)\\
& \quad ={(\hat{T}^\phi \# \hat{T}^\psi)}\left( (f^0 \otimes 1) (\hat{\partial}f^1 \otimes 1 +f^1 \otimes \partial 1) (\hat \partial f^2 \otimes 1 +f^2 \otimes \partial1)(\hat \partial f^3 \otimes 1 +f^3 \otimes \partial1) (\hat \partial f^{4} \otimes 1 +f^{4} \otimes \partial1) \right)\\
& \quad ={(\hat{T}^\phi \# \hat{T}^\psi)} \Big(f^0 \hat \partial f^1 \hat \partial f^2 \hat \partial f^3 \hat \partial f^4 \otimes 1+f^0 \hat \partial f^1 \hat \partial f^2 \hat\partial f^3f^4  \otimes 1 \partial 1 + f^0\hat \partial f^1 \hat \partial f^2f^3f^4 \otimes 1(\partial 1)^2 +\\
&\qquad f^0 \hat \partial f^1f^2f^3 \hat \partial f^4 \otimes 1(\partial 1)^21 + f^0 \hat \partial f^1f^2f^3f^4 \otimes 1(\partial 1)^3 + f^0f^1f^2 \hat \partial f^3\hat \partial f^4 \otimes 1 (\partial 1)^2 \\
&\quad + f^0f^1f^2\hat \partial f^3 f^4 \otimes 1(\partial 1)^3 -
 f^0f^1f^2f^3\hat \partial f^4 \otimes 1(\partial 1)^31 + f^0f^1f^2f^3f^4 \otimes 1(\partial 1)^4\Big)\\
& \quad =\hat{T}^\phi\left(f^0 \hat \partial f^1 \hat \partial f^2f^3f^4\right) \hat{T}^\psi\left(1(\partial 1)^2\right) + \hat{T}^\phi\left(f^0\hat \partial f^1f^2f^3\hat \partial f^4\right) \hat{T}^\psi\left(1(\partial 1)^21\right)\\
& \qquad + \hat{T}^\phi\left(f^0f^1f^2\hat \partial f^3 \hat \partial f^4\right) \hat{T}^\psi\left(1(\partial 1)^2\right)\\
& \quad= \hat{T}^\phi\left(f^0\hat \partial f^1 \hat \partial f^2f^3f^4\right) + \hat{T}^\phi\left(f^0 \hat \partial f^1f^2f^3\hat\partial f^4\right)+ \hat{T}^\phi\left(f^0f^1f^2\hat \partial f^3 \hat \partial f^4\right) 
\end{array}$

\smallskip
The last equality follows by using the fact that $\hat{T}^\psi\left(1(\partial 1)^2\right)=\psi(1,1,1)=1$.
\end{proof}

\begin{prop}\label{Sbound}
Let  $\phi$ be the character of an $r$-dimensional cycle $(\mathcal S,\hat{\partial},\hat{T}^{\phi},\rho)$ over $\mathcal{C}$. Then, $S(\phi)$ is a coboundary.  In particular, we have $S(\phi)=b\psi$, where $\psi \in CN^{r+1}(\mathcal{C})$ is given by
\begin{equation*}
\psi(f^0 \otimes f^1 \otimes \ldots \otimes f^{r+1})=\sum\limits_{j=1}^{r+1} (-1)^{j-1}~\hat{T}^{\phi}\left(f^0\hat{\partial}f^1\ldots  \hat{\partial}f^{j-1} f^j \hat{\partial}f^{j+1} \ldots  \hat{\partial}f^{r+1}\right)
\end{equation*}
\end{prop}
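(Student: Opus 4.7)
The plan is to expand $b\psi$ directly from its definition, substitute the explicit formula for $\psi$, and match the result against the expression for $S(\phi)$ obtained in Lemma \ref{Sform}. Recall that
$$(b\psi)(f^0\otimes\cdots\otimes f^{r+2}) = \sum_{i=0}^{r+1}(-1)^i\,\psi(f^0\otimes\cdots\otimes f^if^{i+1}\otimes\cdots\otimes f^{r+2}) + (-1)^{r+2}\,\psi(f^{r+2}f^0\otimes f^1\otimes\cdots\otimes f^{r+1}).$$
Each of the first $r+2$ terms, upon inserting the definition of $\psi$, becomes a double sum indexed by the contraction position $i$ and the ``undifferentiated slot'' $j$ of $\psi$. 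I would split this double sum into two families: the \emph{diagonal} contributions, in which the undifferentiated slot coincides with the contracted pair $f^if^{i+1}$, and the \emph{off-diagonal} contributions, in which it does not.

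For the off-diagonal contributions, the product $f^if^{i+1}$ sits at a slot that still carries $\hat\partial$, so I would apply the Leibniz rule from \eqref{comp} in the DG-semicategory $\mathcal{S}$. Since every $\rho(f^a)$ lives in $\mathcal{S}^0$, one has $\hat\partial(f^if^{i+1})=(\hat\partial f^i)f^{i+1}+f^i(\hat\partial f^{i+1})$ with no grading sign. A careful reindexing then shows that the piece $(\hat\partial f^i)f^{i+1}$ coming from the $i$-th summand of $b\psi$ cancels the piece $f^i(\hat\partial f^{i+1})$ produced by the $(i-1)$-st summand, because the Hochschild signs $(-1)^i$ and $(-1)^{i-1}$ are opposite. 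Thus all off-diagonal terms cancel in pairs, and only the diagonal terms survive. These diagonal terms are precisely of the form $\hat T^\phi(f^0\hat\partial f^1\cdots \hat\partial f^{i-1}(f^if^{i+1})\hat\partial f^{i+2}\cdots \hat\partial f^{r+2})$, which is exactly the list of summands appearing in Lemma \ref{Sform}.

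For the wrap-around term $(-1)^{r+2}\psi(f^{r+2}f^0,f^1,\ldots,f^{r+1})$, I would apply the graded trace property \eqref{trace} and closedness \eqref{closed} of $\hat T^\phi$ to cyclically rotate the factor $f^{r+2}$ back to the right end of each monomial. This brings the wrap-around contribution into the same template as the interior terms; the new ``boundary'' pieces produced by Leibniz when $\hat\partial$ lands on $f^{r+2}f^0$ are absorbed using $\hat T^\phi\circ\hat\partial=0$, so nothing extra remains.

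The main obstacle is bookkeeping: three independent sign conventions interact, namely the $(-1)^i$ from $b$, the $(-1)^{j-1}$ inside $\psi$, and the $(-1)^{ij}$ from graded cyclicity of $\hat T^\phi$ when handling the wrap-around. Once the grading data is tracked consistently (morphisms from $\rho(\mathcal C)$ have degree $0$, while each $\hat\partial f^a$ has degree $1$), the cancellations align and the resulting expression coincides termwise with the formula from Lemma \ref{Sform}, which proves $S(\phi)=b\psi$.
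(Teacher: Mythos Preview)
Your overall plan—expand $b\psi$ term by term, apply the Leibniz rule to each $\hat\partial(f^if^{i+1})$, rotate the wrap-around term via the graded trace identity, and match against Lemma~\ref{Sform}—is exactly the paper's approach; the paper simply writes out the case $r=2$ in full and declares the general case analogous.

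However, the specific cancellation pattern you describe is wrong. The diagonal term at position $i$ (i.e.\ $j=i$ in the $\psi$-sum, $1\le i\le r+1$) carries the combined sign $(-1)^i(-1)^{i-1}=-1$, so the diagonal contributions sum to $-S(\phi)$, not $+S(\phi)$. Hence the off-diagonal terms cannot all cancel; together with the $i=0$ row and the wrap-around row they in fact contribute $+2S(\phi)$, and it is the total $-S(\phi)+2S(\phi)$ that yields the result. Your telescoping sentence is also incorrect as written: the $(i-1)$-st Hochschild summand involves $\hat\partial(f^{i-1}f^i)$, whose Leibniz pieces are $(\hat\partial f^{i-1})f^i$ and $f^{i-1}(\hat\partial f^i)$, never $f^i(\hat\partial f^{i+1})$. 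And in the wrap-around term the product $f^{r+2}f^0$ sits in the $g^0$ slot of $\psi$, which is \emph{always} undifferentiated, so no Leibniz is applied there and closedness is not needed; the trace identity alone (with $|f^{r+2}|=0$) suffices. A cleaner bookkeeping is to group all resulting monomials by their set $\{0,a,b\}$ of undifferentiated indices: each non-adjacent triple ($b>a+1$) receives four contributions with signs summing to zero, while each adjacent triple $\{0,i,i+1\}$ receives three contributions with net sign $+1$, recovering Lemma~\ref{Sform}.
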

\begin{proof}
Again, we illustrate the case of $r=2$. The general computation is similar.
\begin{equation*}
\begin{array}{ll}
&(b\psi)(f^0\otimes f^1 \otimes f^2 \otimes f^3 \otimes f^4)\\
&=\psi(f^0f^1 \otimes f^2 \otimes f^3 \otimes f^4) - \psi(f^0 \otimes f^1f^2 \otimes f^3 \otimes f^4) + \psi(f^0 \otimes f^1 \otimes f^2f^3 \otimes f^4) - \psi(f^0 \otimes f^1 \otimes f^2 \otimes f^3f^4)\\
&\quad + \psi(f^4f^0 \otimes f^1 \otimes f^2 \otimes f^3)\\
&=\hat{T}^\phi(f^0f^1f^2\hat{\partial}f^3\hat{\partial}f^4)-\hat{T}^\phi(f^0f^1\hat{\partial}f^2f^3\hat{\partial}f^4)+ \hat{T}^\phi(f^0f^1\hat{\partial}f^2\hat{\partial}f^3f^4) \\
&\quad  -\hat{T}^\phi(f^0f^1f^2\hat{\partial}f^3\hat{\partial}f^4)+\hat{T}^\phi(f^0\hat{\partial}(f^1f^2)f^3\hat{\partial} f^4-\hat{T}^\phi(f^0\hat{\partial}(f^1f^2)\hat{\partial}f^3f^4)\\
&\quad +\hat{T}^\phi(f^0f^1\hat{\partial}(f^2f^3)\hat{\partial}f^4)-\hat{T}^\phi(f^0\hat{\partial}f^1f^2f^3\hat{\partial}f^4)+ \hat{T}^\phi(f^0\hat{\partial}f^1\hat{\partial}(f^2f^3)f^4)\\
&\quad -\hat{T}^\phi(f^0f^1\hat{\partial}f^2\hat{\partial}(f^3f^4))+ \hat{T}^\phi(f^0\hat{\partial}f^1f^2\hat{\partial}(f^3f^4))- \hat{T}^\phi(f^0\hat{\partial}f^1\hat{\partial}f^2f^3f^4)\\
&\quad +\hat{T}^\phi(f^4f^0f^1\hat{\partial}f^2\hat{\partial}f^3)- \hat{T}^\phi(f^4f^0\hat{\partial}f^1f^2\hat{\partial}f^3) + \hat{T}^\phi(f^4f^0\hat{\partial}f^1\hat{\partial}f^2f^3)\\
&= \hat{T}^\phi(f^0f^1f^2\hat{\partial}f^3\hat{\partial}f^4) +\hat{T}^\phi(f^0\hat{\partial}f^1(f^2f^3)\hat{\partial}f^4) + \hat{T}^\phi(f^0\hat{\partial}f^1\hat{\partial}f^2f^3f^4) \\
&= (S(\phi))(f^0\otimes f^1 \otimes f^2 \otimes f^3 \otimes f^4)
\end{array}
\end{equation*}
\end{proof}

\begin{prop}
Let $\mathcal{C}$ be a small $\mathbb{C}$-category and let $(\mathscr{H},\mathcal{F})$ be a $p$-summable even Fredholm module over $\mathcal{C}$. Take $ 2m \geq p-1$. Then, 
\begin{equation*}
S(\phi^{2m})=-(m+1)\phi^{2m+2} \qquad \text{in}~ H^{2m+2}_\lambda(\mathcal{C})
\end{equation*}
\end{prop}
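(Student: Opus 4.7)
The plan is to exhibit a cyclic $(2m+1)$-cochain $\eta \in C^{2m+1}_\lambda(\mathcal{C})$ with $b\eta = S(\phi^{2m}) + (m+1)\phi^{2m+2}$, which would establish the identity in $H^{2m+2}_\lambda(\mathcal{C})$. By Theorem~\ref{evencyc}, $\phi^{2m}$ is the character of the cycle $(\Omega'\mathcal{C},\partial',\hat{Tr}_s,\rho')$, so Proposition~\ref{Sbound} immediately gives $S(\phi^{2m}) = b\psi_0$ in the Hochschild complex, for an explicit, generally non-cyclic cochain
\begin{equation*}
\psi_0(f^0\otimes \cdots \otimes f^{2m+1}) = \sum_{j=1}^{2m+1}(-1)^{j-1}\,Tr_s\!\bigl(\mathscr{H}(f^0)[\mathcal{F}, f^1]\cdots \mathscr{H}(f^j)\cdots [\mathcal{F}, f^{2m+1}]\bigr).
\end{equation*}
The issue is to upgrade this Hochschild coboundary to a cyclic coboundary while picking up the error term $(m+1)\phi^{2m+2}$.

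My next step is to expand $S(\phi^{2m})$ via Lemma~\ref{Sform} directly, and use $\mathcal{F}^2 = \mathrm{id}$ to write, for each $i \in \{1,\ldots,2m+1\}$,
\begin{equation*}
\mathscr{H}(f^i f^{i+1}) = \mathcal{F}\mathscr{H}(f^i)\mathscr{H}(f^{i+1})\mathcal{F} + \mathcal{F}\mathscr{H}(f^i)[\mathcal{F}, f^{i+1}] - [\mathcal{F}, f^i]\mathscr{H}(f^{i+1})\mathcal{F} - [\mathcal{F}, f^i][\mathcal{F}, f^{i+1}].
\end{equation*}
The last piece, inserted between the surrounding commutators in the $i$-th summand, immediately produces $-\phi^{2m+2}(f)$; summed over $i=1,\ldots,2m+1$ this yields $-(2m+1)\phi^{2m+2}$. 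One then has to show that the three $\mathcal{F}$-inserted families of residual terms collectively contribute $m\phi^{2m+2}$ modulo cyclic coboundaries.

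The central tool for controlling the $\mathcal F$-inserted families is the closedness of the $(2m+2)$-dimensional graded trace $\hat{Tr}_s$: for any $Y \in Hom^{2m+1}_{\Omega'\mathcal{C}}(X,X)$, $Tr_s(\partial'Y) = 0$. Applied to $Y_j = \mathscr{H}(f^0)[\mathcal{F}, f^1]\cdots \mathscr{H}(f^j)\cdots [\mathcal{F}, f^{2m+2}]$ for $1\le j\le 2m+1$, the Leibniz rule together with $\partial'[\mathcal{F}, f] = 0$ gives
\begin{equation*}
Tr_s\!\bigl([\mathcal{F}, f^0][\mathcal{F}, f^1]\cdots \mathscr{H}(f^j)\cdots [\mathcal{F}, f^{2m+2}]\bigr) = (-1)^j\,\phi^{2m+2}(f).
\end{equation*}
Combining this with the graded trace property (cyclicity with signs) of $\hat{Tr}_s$ and the cyclic invariance of $\phi^{2m+2}$ itself, each of the three $\mathcal{F}$-inserted families is rewritten as the Hochschild coboundary of an explicit cyclic cochain plus an integer multiple of $\phi^{2m+2}$, the multiples summing to $m$.

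The main obstacle is the combinatorial and sign bookkeeping needed to (i) produce exactly the coefficient $m+1$ rather than $2m+1$, and (ii) verify that the assembled correction cochain is cyclic, not merely Hochschild. Both points rely crucially on $\mathcal{F}^2 = \mathrm{id}$ together with the anticommutation $\epsilon\mathcal{F} + \mathcal{F}\epsilon = 0$ between the grading and $\mathcal{F}$; these are precisely the structural features that allow pairwise consolidation of $\mathcal{F}$-inserted terms across adjacent summands, reducing the $\phi^{2m+2}$-count from $2m+1$ to $m+1$ and packaging the leftovers into the cyclic correction $\eta$.
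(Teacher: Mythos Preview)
Your overall strategy---finding a cyclic cochain $\eta\in C^{2m+1}_\lambda(\mathcal C)$ with $b\eta=S(\phi^{2m})+(m+1)\phi^{2m+2}$---matches the paper exactly. But the execution you sketch has a genuine gap: you never write down $\eta$, and the claim that ``each of the three $\mathcal F$-inserted families is rewritten as the Hochschild coboundary of an explicit cyclic cochain plus an integer multiple of $\phi^{2m+2}$, the multiples summing to $m$'' is asserted, not proved. You even flag this yourself (``the main obstacle is the combinatorial and sign bookkeeping''). That bookkeeping \emph{is} the proof; without it you have only reduced the problem to a statement of equal difficulty. The identity you derive from closedness of $\hat{Tr}_s$ is correct and useful, but by itself it does not tell you how to repackage the terms containing naked $\mathcal F$'s (e.g.\ $\mathcal F f^i f^{i+1}\mathcal F$) into a $b$-exact piece that is simultaneously $\lambda$-invariant.

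The paper avoids your decomposition entirely by writing down the cyclic cochain at the outset. It sets
\[
\psi^j(f^0\otimes\cdots\otimes f^{2m+1})=Tr\bigl(\epsilon\,\mathcal F\,f^j[\mathcal F,f^{j+1}]\cdots[\mathcal F,f^{2m+1}][\mathcal F,f^0]\cdots[\mathcal F,f^{j-1}]\bigr)
\]
and $\psi=\sum_{j=0}^{2m+1}(-1)^{j-1}\psi^j$. Cyclicity is then immediate from $\tau\psi^j=\psi^{j-1}$. The computation of $b\psi^j$ is organized around $\alpha^j=f^j\mathcal F f^{j+1}$ and $\beta^j=[\mathcal F,f^{j+2}]\cdots[\mathcal F,f^{2m+2}]f^0[\mathcal F,f^1]\cdots[\mathcal F,f^{j-1}]$, together with the algebraic identity
\[
\mathcal F[\mathcal F,f^jf^{j+1}]+[\mathcal F,\alpha^j]=[\mathcal F,f^j][\mathcal F,f^{j+1}]+2f^jf^{j+1},
\]
which directly produces the factor $2$ on the $S(\phi^{2m})$ piece and $2m+2$ on the $\phi^{2m+2}$ piece, yielding the coefficient $m+1$ after dividing by $2$. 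If you want to salvage your approach, you should reverse-engineer this $\psi$ from your three families: note that your terms $\mathcal F f^i[\mathcal F,f^{i+1}]$ and $-[\mathcal F,f^i]f^{i+1}\mathcal F$ are precisely what appear when you expand $Tr(\epsilon\mathcal F\cdots)$ along a Hochschild boundary; but it is far easier to start from the explicit $\psi^j$ than to discover it from your expansion.
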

\begin{proof}
We will show that $S(\phi^{2m})+(m+1)\phi^{2m+2}=b\psi$ for some $\psi \in Z^{2m+1}_\lambda(\mathcal{C})$.
By Theorem \ref{evencyc}, we know that $\phi^{2m}$ is the character of the $2m$-dimensional cycle $(\Omega' \mathcal{C}, \partial', \hat{Tr}_s,\rho')$ over the category $\mathcal{C}$. Applying Lemma \ref{Sform} and using the fact that $Tr_s(T)=0$ for any homogeneous $T$ of odd degree, we have
\begin{equation*}
\begin{array}{ll}
(S(\phi^{2m}))(f^0 \otimes f^1 \otimes \ldots \otimes f^{2m+2}) &= \sum\limits_{j=0}^{2m+1}  Tr_s\left(f^0[\mathcal F,f^1]\ldots [\mathcal F,f^{j-1}](f^jf^{j+1})[\mathcal F,f^{j+2}]\ldots [\mathcal F,f^{2m+2}] \right)
\end{array}
\end{equation*}
Further,
\begin{equation*}
\begin{array}{ll}
\phi^{2m+2}(f^0 \otimes f^1 \otimes \ldots \otimes f^{2m+2})=  Tr_s\left( f^0[\mathcal F,f^1]\ldots\ldots [\mathcal F,f^{2m+2}] \right)
\end{array}
\end{equation*}
so that
\begin{equation}\label{exp}
\begin{array}{lr}
&\left(S(\phi^{2m})+(m+1)\phi^{2m+2}\right)(f^0 \otimes f^1 \otimes \ldots \otimes f^{2m+2})\qquad \qquad \qquad \qquad \qquad \qquad \\
&= \sum\limits_{j=0}^{2m+1}  Tr_s\left( f^0[\mathcal F,f^1]\ldots [\mathcal F,f^{j-1}](f^jf^{j+1})[\mathcal F,f^{j+2}]\ldots [\mathcal F,f^{2m+2}] \right)\\
&  +(m+1)Tr_s\left(f^0[\mathcal F,f^1]\ldots\ldots [\mathcal F,f^{2m+2}] \right)\qquad \qquad \qquad \qquad \qquad
\end{array}
\end{equation}
We now consider $\psi=\sum\limits_{j=0}^{2m+1} (-1)^{j-1} \psi^j$, where
\begin{equation}\label{defbound}
\psi^j(f^0 \otimes f^1 \otimes \ldots \otimes f^{2m+1})=Tr\left(\epsilon \mathcal  F f^j [\mathcal F,f^{j+1}]\ldots [\mathcal F,f^{2m+1}][\mathcal F,f^0][\mathcal F,f^1]\ldots[\mathcal F,f^{j-1}] \right)
\end{equation}
Since $2m \geq p-1$ and $(\mathscr H,\mathcal F)$ is a $p$-summable even Fredholm module over $\mathcal C$,  it follows that the operator 
$\epsilon \mathcal  F f^j [\mathcal F,f^{j+1}]\ldots [\mathcal F,f^{2m+1}][\mathcal F,f^0][\mathcal F,f^1]\ldots[\mathcal F,f^{j-1}]$ is trace class. 

\smallskip
We observe that $\tau\psi^j=\psi^{j-1}$ for $1\leq j\leq 2m+1$ and $\tau \psi^0=\psi^{2m+1}$. It follows that  $(1-\lambda)(\psi)=0$. Hence, $\psi \in C^{2m+1}_\lambda(\mathcal{C})=Ker(1-\lambda)$. Using \eqref{defbound}, we have
\begin{equation*}
\begin{array}{ll}
&(b\psi^j)(f^0 \otimes f^1 \otimes \ldots \otimes f^{2m+2})\\
& \quad =\sum\limits_{i=0}^{2m+1} (-1)^i ~\psi^j(f^0 \otimes \ldots \otimes f^if^{i+1} \otimes \ldots \otimes f^{2m+2})+  \psi^j(f^{2m+2}f^0 \otimes f^1 \otimes \ldots \otimes f^{2m+1})\\
& \quad =Tr\left(\epsilon \mathcal Ff^{j+1}[\mathcal F,f^{j+2}]\ldots[\mathcal F,f^{2m+2}]f^0[\mathcal F,f^1]\ldots[\mathcal F,f^j]\right)~+\\& \qquad (-1)^{j-1}Tr\left(\epsilon \mathcal  Ff^{j+1}[\mathcal F,f^{j+2}]\ldots[\mathcal F,f^{2m+2}][\mathcal F,f^0][\mathcal F,f^1]\ldots f^j \right)+\\
& \qquad  Tr\left(\epsilon \mathcal Ff^{j}[\mathcal F,f^{j+1}]\ldots[\mathcal F,f^{2m+2}]f^0[\mathcal F,f^1]\ldots[\mathcal F,f^{j-1}]\right)
\end{array}
\end{equation*}
We now set $\beta^j=[\mathcal F,f^{j+2}]\ldots[\mathcal F,f^{2m+2}]f^0[\mathcal F,f^1]\ldots[\mathcal F,f^{j-1}]$.  Then,  we have
\begin{equation*}
\begin{array}{ll}
[\mathcal F,\beta^j]=\mathcal F\beta^j- (-1)^{2m}\beta^j \mathcal F&=\mathcal F[\mathcal F,f^{j+2}]\ldots[\mathcal F,f^{2m+2}]f^0[\mathcal F,f^1]\ldots[\mathcal F,f^{j-1}]-\\
&\qquad [\mathcal F,f^{j+2}]\ldots[\mathcal F,f^{2m+2}]f^0[\mathcal F,f^1]\ldots[\mathcal F,f^{j-1}]\mathcal F\\
&= (-1)^{j-1}[\mathcal F,f^{j+2}]\ldots [\mathcal F,f^{2m+2}][\mathcal F,f^0][\mathcal F,f^1]\ldots[\mathcal F,f^{j-1}]
\end{array}
\end{equation*}
With $\alpha^j=f^j \mathcal Ff^{j+1}$, we get
\begin{equation}
\begin{array}{ll}
&(-1)^{j-1}Tr\left(\epsilon \mathcal Ff^{j+1}[\mathcal F,f^{j+2}]\ldots[\mathcal F,f^{2m+2}][\mathcal F,f^0][\mathcal F,f^1]\ldots [\mathcal F,f^{j-1}]f^j \right)\\
& \quad =Tr\left(\epsilon \mathcal Ff^{j+1}[\mathcal F,\beta^j]f^j\right)=Tr_s\left(\alpha^j[\mathcal F,\beta^j]\right)=Tr_s([\mathcal F,\alpha^j]\beta^j)
\end{array}
\end{equation} where we have used the fact that $Tr_s$ is a closed graded trace and $Tr_s(T)=Tr(\epsilon T)$ for any operator that is trace class (see \cite[Lemma 2]{C2}).
Thus, we have
\begin{equation*}
\begin{array}{ll}
&(b\psi^j)(f^0 \otimes f^1 \otimes \ldots \otimes f^{2m+2})=-Tr_s\left([\mathcal F,f^j] \mathcal  Ff^{j+1}\beta^j\right)+Tr_s([\mathcal F,\alpha^j]\beta^j)+Tr_s\left( \mathcal Ff^j[\mathcal F,f^{j+1}]\beta^j\right)
\end{array}
\end{equation*}
Since
\begin{equation*}
\mathcal F[\mathcal F,f^jf^{j+1}]=\mathcal F[\mathcal F,f^j]f^{j+1}+\mathcal Ff^j[\mathcal F,f^{j+1}]=-[\mathcal F,f^j]\mathcal Ff^{j+1}+ \mathcal Ff^j[\mathcal F,f^{j+1}],
\end{equation*}
we obtain 
\begin{equation*}
\begin{array}{ll}
(b\psi^j)(f^0 \otimes f^1 \otimes \ldots \otimes f^{2m+2})&=Tr_s\big(\left(\mathcal F[\mathcal F,f^jf^{j+1}] + [\mathcal F,\alpha^j]\right)\beta^j\Big)
\end{array}
\end{equation*}

As 
\begin{equation*}
\mathcal F[\mathcal F,f^jf^{j+1}] +[\mathcal F,\alpha^j]=\mathcal F[\mathcal F,f^jf^{j+1}] +\mathcal F \alpha^j + \alpha^j \mathcal F =[\mathcal F,f^j][\mathcal F,f^{j+1}]+2f^jf^{j+1}
\end{equation*}
 we get
\begin{equation*}
\begin{array}{ll}
(b\psi)(f^0 \otimes f^1 \otimes \ldots \otimes f^{2m+2})&=\sum\limits_{j=0}^{2m+1} (-1)^{j-1} (b\psi^j) (f^0 \otimes f^1 \otimes \ldots \otimes f^{2m+2})\\
&=\sum\limits_{j=0}^{2m+1}(-1)^{j-1} \big( 2~Tr_s\left(f^jf^{j+1}\beta^j\right) + Tr_s\left([\mathcal F,f^j][\mathcal F,f^{j+1}]\beta^j\right) \big)\\
&=\sum\limits_{j=0}^{2m+1} 2~Tr_s \left(f^0[\mathcal F,f^1]\ldots [\mathcal F,f^{j-1}](f^jf^{j+1})[\mathcal F,f^{j+2}]\ldots [\mathcal F,f^{2m+2}] \right)\\
& \quad +\sum\limits_{j=0}^{2m+1}Tr_s\left(f^0[\mathcal F,f^1] \ldots  [\mathcal F,f^{2m+2}]\right)\\
&=\sum\limits_{j=0}^{2m+1}~2~Tr_s\left(f^0[\mathcal F,f^1]\ldots [\mathcal F,f^{j-1}](f^jf^{j+1})[\mathcal F,f^{j+2}]\ldots [\mathcal F,f^{2m+2}] \right)\\
& \quad +(2m+2)Tr\left(f^0[\mathcal F,f^1] \ldots  [\mathcal F,f^{2m+2}]\right)
\end{array}
\end{equation*}
The result now follows by \eqref{exp}.
\end{proof}

\section{Homotopy invariance of the Chern character}

Let $SHilb_2$ be the category whose objects are separable Hilbert spaces and for any two objects $\mathcal H,\mathcal H' \in SHilb_2$,  the morphism space is given by
\begin{equation*}
SHilb_2(\mathcal H,\mathcal H'):=\mathcal{B}(\mathcal H \oplus \mathcal  H, \mathcal H' \oplus\mathcal  H')
\end{equation*} For any object $\mathcal H\in SHilb_2$, the morphism in $SHilb_2(\mathcal H,\mathcal H)= \mathcal{B}(\mathcal H \oplus \mathcal H, \mathcal H \oplus\mathcal  H)$ given by the matrix 
$\begin{pmatrix} 0 & 1\\ 1 & 0 \\ \end{pmatrix}$ swapping the two copies of $\mathcal H$ will be denoted by $F(\mathcal H)$.   
We also consider the functor $i:SHilb_2 \longrightarrow SHilb_{\mathbb{Z}_2}$ given by
\begin{equation}\label{i}
i(\mathcal H):=\mathcal H \oplus \mathcal H \qquad i(T):=T
\end{equation}
for any $\mathcal H \in SHilb_2$ and $T\in SHilb_2(\mathcal H,\mathcal H')$.

\begin{lemma}\label{5.2x}
Let $\mathcal C$ be a small $\mathbb{C}$-category and $\{\mathscr{H}_t:\mathcal{C} \longrightarrow SHilb_2\}_{t \in [0,1]}$ be a family of functors such that
for each $X\in Ob(\mathcal C)$, we have $\mathscr H_t(X)=\mathscr H_{t'}(X)=\mathscr H(X)$ for all $t$, $t'\in [0,1]$.

\smallskip For $t \in [0,1]$, we set $\widehat{\mathscr{H}}_t:=i \circ \mathscr{H}_t$. For each $f:X \longrightarrow Y$ in $\mathcal{C}$, we assume that the function
\begin{equation*} p_f:[0,1] \longrightarrow SHilb_{\mathbb{Z}_2}(\widehat{\mathscr H}_t(X),\widehat{\mathscr H}_t(Y))=\mathcal B(\mathscr H(X)
\oplus \mathscr H(X),
\mathscr H(Y)\oplus \mathscr H(Y))\qquad t \mapsto \widehat{\mathscr{H}}_t
(f)\end{equation*} is strongly $C^1$. Then if $\delta_t(f):=p_f'(t)$, we have
\begin{equation*}
\delta_t(fg)=\widehat{\mathscr{H}}_t(f) \circ \delta_t(g) + \delta_t(f) \circ \widehat{\mathscr{H}}_t
(g)
\end{equation*} for composable morphisms $f$, $g$ in $\mathcal C$.
\end{lemma}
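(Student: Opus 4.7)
The plan is to derive the Leibniz identity by differentiating the functoriality relation $\widehat{\mathscr H}_t(fg)=\widehat{\mathscr H}_t(f)\circ \widehat{\mathscr H}_t(g)$, i.e.\ $p_{fg}(t)=p_f(t)\circ p_g(t)$, term by term. Since the $C^1$ hypothesis is only strong and not norm-continuous, the natural approach is to fix a vector $\xi\in \mathscr H(X)\oplus \mathscr H(X)$ and work with the vector-valued function $t\mapsto p_f(t)p_g(t)\xi$, then show the resulting identity holds for all $\xi$.

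Concretely, for small $h$ I would write the difference quotient in the standard Leibniz-splitting form
\[
\frac{p_{fg}(t+h)\xi - p_{fg}(t)\xi}{h} \;=\; p_f(t+h)\!\left(\tfrac{p_g(t+h)\xi-p_g(t)\xi}{h}\right) + \left(\tfrac{p_f(t+h)-p_f(t)}{h}\right)\!p_g(t)\xi.
\]
The second summand converges to $\delta_t(f)\circ \widehat{\mathscr H}_t(g)\,\xi$ as $h\to 0$ directly from the strong $C^1$ assumption on $p_f$ applied to the fixed vector $p_g(t)\xi$. For the first summand I would insert $\pm p_f(t+h)\,\delta_t(g)\xi$ and rewrite it as $p_f(t+h)\bigl(\tfrac{p_g(t+h)\xi-p_g(t)\xi}{h}-\delta_t(g)\xi\bigr) + p_f(t+h)\,\delta_t(g)\xi$. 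The second piece converges to $\widehat{\mathscr H}_t(f)\circ \delta_t(g)\,\xi$ by strong continuity of $p_f$ at $t$, while the first piece tends to zero provided $\|p_f(t+h)\|$ remains bounded as $h\to 0$.

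The only real obstacle is that local uniform boundedness, which is precisely where one needs more than strong pointwise information. I would handle it via Banach--Steinhaus: for each vector $\eta$, the map $s\mapsto p_f(s)\eta$ is continuous on the compact interval $[0,1]$ (being strongly $C^1$), hence bounded in $s$, so the uniform boundedness principle yields $\sup_{s\in[0,1]}\|p_f(s)\|<\infty$. This legitimizes the limit argument in the previous paragraph. Summing the three limits produces $\delta_t(fg)\,\xi = \widehat{\mathscr H}_t(f)\circ\delta_t(g)\,\xi + \delta_t(f)\circ \widehat{\mathscr H}_t(g)\,\xi$ for every $\xi$, which is the claimed operator identity. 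Thus the lemma is essentially the product rule for strongly differentiable operator-valued functions, once one observes that the functoriality of $\widehat{\mathscr H}_t$ turns $p_{fg}$ into a product.
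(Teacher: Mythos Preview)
Your proof is correct and follows essentially the same idea as the paper: both differentiate the functoriality relation $p_{fg}(t)=p_f(t)\circ p_g(t)$ by a direct difference-quotient computation. The only cosmetic difference is that the paper subtracts the entire right-hand side first and simplifies the discrepancy to $\lim_{s\to 0}\tfrac{1}{s}\bigl(p_f(t+s)-p_f(t)\bigr)\bigl(p_g(t+s)-p_g(t)\bigr)=0$, while you use the standard asymmetric Leibniz splitting; your explicit invocation of Banach--Steinhaus for the local uniform bound on $\|p_f(t+h)\|$ makes precise a step the paper's final line handles more tersely.
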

\begin{proof} We have
\begin{equation*}
\begin{array}{ll}
&\delta_t(fg)-\widehat{\mathscr{H}}_t(f) \circ \delta_t(g) - \delta_t(f) \circ \widehat{\mathscr{H}}_t
(g)\\
& \quad =p'_{fg}(t)-\widehat{\mathscr{H}}_t(f) \circ p'_g(t) -p'_f(t)  \circ \widehat{\mathscr{H}}_t
(g)\\
& \quad =\lim\limits_{s \to 0} \frac{1}{s} \left(p_{fg}(t+s)-p_{fg}(t) - \widehat{\mathscr{H}}_t(f)~ \circ ~p_g(t+s)+  \widehat{\mathscr{H}}_t(f)~ \circ~ p_g(t)- p_f(t+s)~ \circ~ \widehat{\mathscr{H}}_t
(g)+p_f(t)~ \circ~ \widehat{\mathscr{H}}_t(g)\right)\\
& \quad =\lim\limits_{s \to 0} \frac{1}{s}\left(\widehat{\mathscr{H}}_{t+s}(fg)   -\widehat{\mathscr{H}}_{t}(fg)  - \widehat{\mathscr{H}}_t(f)\widehat{\mathscr{H}}_{t+s}(g) +  \widehat{\mathscr{H}}_t(f) \widehat{\mathscr{H}}_{t}(g) - \widehat{\mathscr{H}}_{t+s}(f) \widehat{\mathscr{H}}_t
(g)+\widehat{\mathscr{H}}_{t}(f) \widehat{\mathscr{H}}_t(g)\right)\\
& \quad =\lim\limits_{s \to 0} \frac{1}{s}\left(\widehat{\mathscr{H}}_{t+s}(f)-\widehat{\mathscr{H}}_{t}(f)\right)\left(\widehat{\mathscr{H}}_{t+s}(g)-\widehat{\mathscr{H}}_{t}(g)\right)\\
& \quad =\lim\limits_{s \to 0}\frac{1}{s} \left(p_f(t+s)-p_f(t)\right) \left(p_g(t+s)-p_g(t)\right)\\
& \quad = p_f'(t) \lim\limits_{s \to 0} \left(p_g(t+s)-p_g(t)\right)=0 
\end{array}\qedhere
\end{equation*}
\end{proof}

For each $n \in \mathbb{Z}_{\geq 0}$, we define an operator $A:CN^{n}(\mathcal{C}) \longrightarrow CN^n(\mathcal{C})$ given by
\begin{equation*}
A:=1+\lambda+\lambda^2+\ldots+\lambda^n
\end{equation*}
We observe that if $\psi \in C^n_\lambda(\mathcal{C})=Ker(1-\lambda)$, then $A\psi=(n+1)\psi$. From the relation
\begin{equation*}
(1-\lambda)(1+2\lambda+3\lambda^2+\cdots+(n+1)\lambda^n)=A-(n+1)\cdot 1
\end{equation*}
it is immediate that $Ker(A) \subseteq Im(1-\lambda)$.

\smallskip
Let $B_0: CN^{n+1}(\mathcal{C}) \longrightarrow CN^n(\mathcal{C})$ be the map defined as follows:
\begin{equation*}
(B_0\phi)(f^0 \otimes \ldots \otimes f^n):=\phi(id_{X_0} \otimes f^0 \otimes \ldots \otimes f^n)-(-1)^{n+1}\phi(f^0 \otimes \ldots \otimes f^n \otimes id_{X_0})
\end{equation*}
for any $f^0 \otimes f^1 \otimes \ldots \otimes f^{n} \in Hom_{\mathcal{C}}(X_1,X_0) \otimes Hom_{\mathcal{C}}(X_2,X_1) \otimes \ldots \otimes Hom_{\mathcal{C}}(X_0,X_{n})$.  We now set
\begin{equation*}
B:=AB_0:  CN^{n+1}(\mathcal{C}) \longrightarrow CN^n(\mathcal{C})
\end{equation*}

\begin{lemma}\label{5.3g} We have

(1) $bA=Ab'$.

\smallskip
(2) $bB+Bb=0$.
\end{lemma}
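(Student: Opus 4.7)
The plan is to derive both identities from the standard cocyclic relations on the cochain complex $CN^\bullet(\mathcal{C})$, adapting the classical argument (as in \cite{Loday}, Lemma 2.1.7) to the many-object setting. First I would record explicitly the face maps $\partial_i^*: CN^n(\mathcal{C}) \to CN^{n+1}(\mathcal{C})$ dual to the cyclic nerve, giving
\begin{equation*}
b = \sum_{i=0}^{n+1} (-1)^i \partial_i^*, \qquad b' = \sum_{i=0}^{n} (-1)^i \partial_i^*,
\end{equation*}
and verify the cocyclic identities $\tau_{n+1} \partial_i^* = \partial_{i-1}^* \tau_n$ for $1 \le i \le n+1$, $\tau_{n+1} \partial_0^* = \partial_{n+1}^*$, together with $\tau_n^{n+1} = \mathrm{id}$ (these are dual to the standard identities $d_i t = t d_{i-1}$ and $d_0 t = d_n$ for the cyclic nerve as a cyclic object). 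Passing to $\lambda = (-1)^n \tau$, these yield $\lambda_{n+1} \partial_i^* = -\partial_{i-1}^* \lambda_n$ for $1 \le i \le n+1$ and $\lambda_{n+1} \partial_0^* = (-1)^{n+1} \partial_{n+1}^*$.

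For (1), I would push the face operators in $b$ through each power $\lambda_n^k$ inside $A_n = \sum_{k=0}^n \lambda_n^k$, using the cocyclic identities above. A short induction (or direct expansion) shows that each term $b\lambda_n^k$ can be rewritten as $\lambda_{n+1}^k b'$ plus a ``boundary'' contribution involving the missing face $\partial_{n+1}^*$ together with a shifted power of $\lambda_{n+1}$. Summing over $k = 0, 1, \ldots, n$, these boundary contributions telescope (using $\lambda_n^{n+1} = \mathrm{id}$), leaving exactly $A_{n+1} b'$.

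For (2), applying (1) to the definition $B = A B_0$ gives
\begin{equation*}
bB + Bb = bAB_0 + AB_0 b = A b' B_0 + A B_0 b = A(b' B_0 + B_0 b),
\end{equation*}
so it suffices to show $A(b'B_0 + B_0 b) = 0$. I would expand $b'B_0 + B_0 b$ using the explicit definition of $B_0$ (insertion of identities at the leading and trailing positions) together with the face expansions of $b$ and $b'$. The cosimplicial identities between identity insertions and face operators cause most contributions to cancel in pairs (a face map acting next to an inserted identity produces either the cochain itself or an insertion at a shifted position), and the surviving residue is manifestly an element of $\mathrm{Im}(1 - \lambda_{n+1})$. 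Since $A(1-\lambda) = 1 - \lambda^{n+2} = 0$ on $CN^{n+1}(\mathcal{C})$ (because $\tau^{n+2} = \mathrm{id}$), the residue is annihilated by $A$, giving $bB + Bb = 0$.

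The main obstacle will be the sign and index bookkeeping in (2). Because $B_0$ inserts identities at both the leading and trailing ends of the cochain argument, the face operators in $b$ and $b'$ interact with these insertions via distinct cosimplicial identities on each end, and some care is needed to track how the leftover terms assemble into an element of $\mathrm{Im}(1-\lambda)$ so that $A$ annihilates them.
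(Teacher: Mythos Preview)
Your proposal is correct and follows essentially the same route as the paper: for (1) the paper simply cites the cocyclic identities for $CN^\bullet(\mathcal C)$ from \cite{Loday}, and for (2) the paper likewise reduces to $A(b'B_0+B_0b)=0$ via part (1). The one difference worth noting is that the paper computes the residue \emph{exactly}, obtaining the identity $B_0b+b'B_0=1-\lambda$ (not merely something in $\mathrm{Im}(1-\lambda)$); this sharper form is invoked later in the proof of Proposition~\ref{prop5.5h}, so when you carry out the cancellation you should aim for the precise equality rather than just membership in the image.
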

\begin{proof}
(1) This follows from the general fact that the dual $CN^\bullet(\mathcal C)$  of the cyclic nerve of $\mathcal C$ is a cocyclic module (see, for instance, \cite[$\S$ 2.5]{Loday}).

\smallskip
(2) For any $f^0 \otimes f^1 \otimes \ldots \otimes f^{n} \in Hom_{\mathcal{C}}(X_1,X_0) \otimes Hom_{\mathcal{C}}(X_2,X_1) \otimes \ldots \otimes Hom_{\mathcal{C}}(X_0,X_{n})$ and $\phi \in CN^n{\mathcal{C}}$, we have 
\begin{equation*}
\begin{array}{lll}
&(B_0b\phi)(f^0\otimes \ldots \otimes f^{n})\\
&\quad =(b\phi)(id_{X_0} \otimes f^0 \otimes \ldots \otimes f^n)-(-1)^{n+1}(b\phi)(f^0 \otimes \ldots \otimes f^n \otimes id_{X_0})\\
& \quad= \phi(f^0 \otimes \ldots \otimes f^{n}) + \sum\limits_{i=0}^{n-1}(-1)^{i+1} \phi(id_{X_0} \otimes f^0 \otimes \ldots \otimes f^if^{i+1} \otimes \ldots \otimes f^{n}) +(-1)^{n+1}  \phi(f^n \otimes f^0 \otimes \ldots \otimes f^{n-1}) \\
& \qquad -(-1)^{n+1}\left(\sum\limits_{i=0}^{n-1}(-1)^{i} \phi(f^0 \otimes \ldots \otimes f^if^{i+1} \otimes \ldots \otimes f^{n} \otimes id_{X_0})  \right)
\end{array}
\end{equation*}
On the other hand,
\begin{equation*}
\begin{array}{lll}
&(b'B_0\phi)(f^0\otimes \ldots \otimes f^{n})\\
& \quad =\sum\limits_{i=0}^{n-1}(-1)^{i} \phi(id_{X_0} \otimes f^0 \otimes \ldots \otimes f^if^{i+1} \otimes \ldots \otimes f^{n})-(-1)^n\sum\limits_{i=0}^{n-1}(-1)^{i} \phi(f^0 \otimes \ldots \otimes f^if^{i+1} \otimes \ldots \otimes f^{n} \otimes id_{X_0})
\end{array}
\end{equation*}
Thus, we obtain
\begin{equation*}
(B_0b+b'B_0)(\phi)(f^0\otimes \ldots \otimes f^{n})=\phi(f^0 \otimes \ldots \otimes f^{n})+(-1)^{n+1} \phi(f^n \otimes f^0 \otimes \ldots \otimes f^{n-1})
\end{equation*}
Therefore,
\begin{equation}\label{5.1ln}
(B_0b+b'B_0)(\phi)=\phi-\lambda \phi
\end{equation}
Now, by applying the operator $A$ to both sides of \eqref{5.1ln}, we have
\begin{equation*}
AB_0b+Ab'B_0=0
\end{equation*}
The result now follows by part \it(1).
\end{proof}

\begin{prop}\label{Prop4.3gf}
The image of the map $B:CN^{n+1}(\mathcal{C}) \longrightarrow CN^{n}(\mathcal{C})$ is
$C^n_\lambda(\mathcal{C})$.
\end{prop}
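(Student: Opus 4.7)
The plan is to prove the two inclusions separately. For $\mathrm{Im}(B) \subseteq C^n_\lambda(\mathcal{C})$, the argument is purely algebraic. On $CN^n(\mathcal{C})$ the unsigned cyclic operator satisfies $\tau_n^{n+1}=\mathrm{id}$, so $\lambda^{n+1}=(-1)^{n(n+1)}\tau_n^{n+1}=\mathrm{id}$. The telescoping identity $(1-\lambda)A = 1-\lambda^{n+1}$ therefore vanishes, which forces $B=AB_0$ to land in $\mathrm{Ker}(1-\lambda)=C^n_\lambda(\mathcal{C})$.

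For the reverse inclusion $C^n_\lambda(\mathcal{C}) \subseteq \mathrm{Im}(B)$, the idea is to write down an explicit preimage. Given $\psi \in C^n_\lambda(\mathcal{C})$, I will define $\phi \in CN^{n+1}(\mathcal{C})$ by ``folding'' the first and last slots together:
\[
\phi(g^0 \otimes g^1 \otimes \cdots \otimes g^{n+1}) := \psi(g^1 \otimes g^2 \otimes \cdots \otimes g^n \otimes (g^{n+1} \circ g^0)).
\]
The composable-chain check is routine: if $g^i \in Hom_{\mathcal{C}}(X_{i+1},X_i)$ with $X_{n+2}=X_0$, then $g^{n+1} \circ g^0 \in Hom_{\mathcal{C}}(X_1,X_{n+1})$, and together with $g^1,\ldots,g^n$ this assembles into a valid element of $CN_n(\mathcal{C})$.

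The main computation is then the direct evaluation of $B_0\phi$. Plugging $id_{X_0}$ into the first slot collapses $g^{n+1}\circ g^0$ to $f^n$, giving $\psi(f^0,\ldots,f^n)$; plugging $id_{X_0}$ into the last slot collapses $g^{n+1}\circ g^0$ to $f^0$, giving $\psi(f^1,\ldots,f^n,f^0)$. Using the cyclicity relation $\tau_n \psi = (-1)^n\psi$ rewrites the second term as $(-1)^n \psi(f^0,\ldots,f^n)$, and the $(-1)^{n+1}$ appearing in the definition of $B_0$ combines with this sign to \emph{add} the two contributions, yielding $B_0\phi = 2\psi$. Multiplying by $A$ and using $A\psi = (n+1)\psi$ from the remark preceding Lemma \ref{5.3g}, I conclude $B\phi = 2(n+1)\psi$, so $\psi = B\bigl(\tfrac{1}{2(n+1)}\phi\bigr) \in \mathrm{Im}(B)$.

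The only real subtlety is the sign bookkeeping in the second step: one must verify that the $(-1)^{n+1}$ from $B_0$ and the $(-1)^n$ from the cyclic shift multiply to $-1$, so that subtraction becomes addition and produces $2\psi$ rather than $0$. Apart from this, the proof is a formal computation hinging on the two identities $\lambda^{n+1}=1$ and $A\psi=(n+1)\psi$ on $C^n_\lambda(\mathcal{C})$.
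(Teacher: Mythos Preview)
Your argument is correct. The inclusion $\mathrm{Im}(B)\subseteq C^n_\lambda(\mathcal C)$ is handled exactly as in the paper, via $(1-\lambda)A=0$. For the reverse inclusion, however, your construction is genuinely different from the paper's. The paper fixes an auxiliary linear functional $\eta$ on $\bigoplus_{X,Y}Hom_{\mathcal C}(X,Y)$ with $\eta(id_X)=1$ and $\eta(f)=0$ whenever $f\in Hom_{\mathcal C}(X,Y)$ with $X\neq Y$, and then builds a three-term cochain in $CN^{n+1}(\mathcal C)$ from $\eta$ and the given cyclic cochain; after some bookkeeping this also satisfies $B_0(\text{preimage})=2\psi$. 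Your ``fold the ends together'' preimage $\phi(g^0\otimes\cdots\otimes g^{n+1})=\psi(g^1\otimes\cdots\otimes g^n\otimes g^{n+1}g^0)$ reaches the same conclusion with a single term and no auxiliary choice, and the cyclic-composability check you indicate is the only thing one has to verify for well-definedness. The paper's route is closer to Connes' original treatment for unital algebras (where one exploits the unit via a functional like $\eta$); your route is more elementary and arguably cleaner in the categorical setting, since it uses only the composition in $\mathcal C$ and the cyclic invariance of $\psi$.
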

\begin{proof}
Let $\phi \in C^n_\lambda(\mathcal{C})$ and let $R:=\bigoplus\limits_{X,Y \in Ob(\mathcal{C})}Hom(X,Y)$. Then $A$ is an algebra with mutiplication given by composition wherever possible and $0$ otherwise. We choose a linear map $\eta: A \longrightarrow \mathbb{C}$ such that 
\begin{equation*}
\begin{array}{ll}
\eta(f)=0 &\qquad \text{for}~ f \in Hom_\mathcal{C}(X,Y), ~X \neq Y\\
\eta(id_X)=1 & \qquad \forall X \in Ob(\mathcal{C})
\end{array}
\end{equation*}

We now define $\psi \in CN^{n+1}(\mathcal{C})$ by setting
\begin{equation*}
\begin{array}{ll}
\psi(f^0\otimes \ldots \otimes f^{n+1}):=&\eta(f^0)\phi(f^1\otimes \ldots \otimes f^{n+1})+\\
&\quad (-1)^n \left(\phi\left(f^0 \otimes f^1 \otimes \ldots \otimes f^{n}\right)\eta(f^{n+1}) -\eta(f^0)\phi\left(id_{X_1} \otimes f^1 \otimes \ldots \otimes f^{n}\right)\eta(f^{n+1})\right)
\end{array}
\end{equation*}
for any $f^0 \otimes f^1 \otimes \ldots \otimes f^{n+1} \in Hom_{\mathcal{C}}(X_1,X_0) \otimes Hom_{\mathcal{C}}(X_2,X_1) \otimes \ldots \otimes Hom_{\mathcal{C}}(X_0,X_{n+1})$. 
We observe that if the tuple $(f^1, \ldots, f^{n+1})$ is not cyclically composable, i.e., $X_0 \neq X_1$, then the first term vanishes as $\eta(f^0)=0$. Similarly, if the tuple $(f^0, \ldots, f^{n})$ is not cyclically composable, i.e., $X_{n+1} \neq X_0$, then the second term vanishes. For the last term, $\eta(f^0)$ and $\eta(f^{n+1})$ will be non zero only if $X_1=X_0$ and $X_0=X_{n+1}$ which means that $X_{n+1}=X_1$ and the tuple $(id_{X_1},f^1, \ldots, f^{n})$ is cyclically composable.

Then, for   any $g^0 \otimes g^1 \otimes \ldots \otimes g^{n} \in Hom_{\mathcal{C}}(Y_1,Y_0) \otimes Hom_{\mathcal{C}}(Y_2,Y_1) \otimes \ldots \otimes Hom_{\mathcal{C}}(Y_0,Y_{n})$,
we have
\begin{equation*}
\begin{array}{ll}
\psi(id_{Y_0} \otimes g^0 \otimes \ldots \otimes g^{n})&=\eta(id_{Y_0})\phi(g^0\otimes \ldots \otimes g^{n})+(-1)^n \big(\phi\left(id_{Y_0} \otimes g^0 \otimes \ldots \otimes g^{n-1}\right)\eta(g^{n}) \\
& \quad -\phi\left(\eta(id_{Y_0})id_{Y_0} \otimes g^0 \otimes \ldots \otimes g^{n-1}\right)\eta(g^{n})\big)\\
&=\phi(g^0\otimes \ldots \otimes g^{n})
\end{array}
\end{equation*}
Also
\begin{equation*}
\begin{array}{ll}
\psi(g^0 \otimes \ldots \otimes g^{n} \otimes id_{Y_0})&=\eta(g^0)\phi(g^1\otimes \ldots \otimes g^{n} \otimes id_{Y_0})+(-1)^n \big(\phi\left(g^0 \otimes \ldots \otimes g^{n}\right)\eta(id_{Y_0})\\
&\quad -\phi\left(\eta(g^0)id_{Y_1} \otimes g^1 \otimes \ldots \otimes g^{n}\right)\eta(id_{Y_0})\big)\\
& =(-1)^n\phi\left(g^0 \otimes \ldots \otimes g^{n}\right) \\
\end{array}
\end{equation*} where the second equality follows from the fact that $\phi\in C_\lambda^n(\mathcal C)$ and that $\eta(g^0)=0$ whenever $Y_1\ne Y_0$. 
Thus,
\begin{equation*}
\begin{array}{ll}
(B_0\psi)(g^0 \otimes \ldots \otimes g^n)&=\psi(id_{Y_0} \otimes g^0 \otimes \ldots \otimes g^n)-(-1)^{n+1}\psi(g^0 \otimes \ldots \otimes g^n \otimes id_{Y_0})\\
&=2\phi(g^0 \otimes \ldots \otimes g^n)
\end{array}
\end{equation*}
Since $\phi \in Ker(1-\lambda)$, we now have $B\psi=2A\phi=2(n+1)\phi$. Thus, $\phi \in Im(B)$. Conversely, let $\phi \in Im(B)$. Then, $\phi=B\psi$ for some $\psi \in CN^{n+1}(\mathcal{C})$. Using the fact that $(1-\lambda)A=0$, we have
\begin{equation*}
\begin{array}{c}
(1-\lambda)(\phi)=(1-\lambda)(B\psi)=((1-\lambda)AB_0)\psi=0\\
\end{array} 
\end{equation*}
This proves the result.
\end{proof}

\begin{prop}\label{prop5.5h}
Let $\psi \in CN^n(\mathcal{C})$ be such that $b\psi \in C^{n+1}_\lambda(\mathcal{C})$. Then,

\smallskip
(1) $B\psi \in Z^{n-1}_\lambda(\mathcal{C})$ i.e., $b(B\psi)=0$ and $(1-\lambda)(B\psi)=0$.

\smallskip
(2) $S(B\psi)=n(n+1)b\psi$ in $H^{n+1}_\lambda(\mathcal{C})$.
\end{prop}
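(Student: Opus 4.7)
The plan for part (1) has two pieces. The cyclicity $(1-\lambda)(B\psi) = 0$ is immediate: since $B = AB_0$, and on every $CN^k$ one has $(1-\lambda)A = 1 - \lambda^{k+1} = 0$ (because $\tau_k^{k+1} = id$ and $\lambda_k = (-1)^k\tau_k$ together force $\lambda^{k+1} = 1$). For the Hochschild cocycle condition $b(B\psi) = 0$, apply Lemma \ref{5.3g}(2) to get $b(B\psi) = -B(b\psi) = -AB_0(b\psi)$, and observe that $B_0$ vanishes on $Ker(1-\lambda)$: for $\chi \in CN^{n+1}$ with $\tau_{n+1}\chi = (-1)^{n+1}\chi$, cyclically permuting the tuple $(f^0,\ldots,f^n,id)$ yields $\chi(id \otimes f^0 \otimes \ldots \otimes f^n) = (-1)^{n+1}\chi(f^0 \otimes \ldots \otimes f^n \otimes id)$, so the two summands defining $B_0\chi$ cancel. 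Applied to $\chi = b\psi \in C^{n+1}_\lambda$, this gives $B(b\psi) = 0$.

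For part (2), by part (1) the class $B\psi \in Z^{n-1}_\lambda(\mathcal C)$ is a cyclic cocycle, so Proposition \ref{one-one} produces a closed graded trace $\hat T^{B\psi}$ of dimension $n-1$ on $\Omega\mathcal C$ satisfying $\hat T^{B\psi}(g^0 \partial g^1 \ldots \partial g^{n-1}) = (B\psi)(g^0 \otimes \ldots \otimes g^{n-1})$, which realizes $B\psi$ as the character of the $(n-1)$-dimensional cycle $(\Omega\mathcal C, \partial, \hat T^{B\psi}, id)$. Proposition \ref{Sbound} then supplies an explicit Hochschild primitive
\begin{equation*}
\sigma(f^0 \otimes \ldots \otimes f^n) := \sum_{j=1}^{n}(-1)^{j-1}\,\hat T^{B\psi}\bigl(f^0\partial f^1 \ldots \partial f^{j-1}\, f^j\, \partial f^{j+1} \ldots \partial f^n\bigr)
\end{equation*}
with $S(B\psi) = b\sigma$. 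It therefore suffices to construct $\eta \in C^n_\lambda(\mathcal{C})$ with $b\eta = b\sigma - n(n+1)\,b\psi$, as this will exhibit $S(B\psi) - n(n+1)b\psi$ as a cyclic coboundary.

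The strategy is to unravel $\sigma$ explicitly. Applying the composition law \eqref{compoDG} in $\Omega\mathcal C$ rewrites each element $f^0\partial f^1 \ldots \partial f^{j-1}\, f^j\, \partial f^{j+1}\ldots \partial f^n$ as a signed sum of standard-form expressions $g^0\partial g^1\ldots \partial g^{n-1}$, on which $\hat T^{B\psi}$ evaluates to $(AB_0\psi)(g^0 \otimes \ldots \otimes g^{n-1})$. This re-expresses $\sigma$ as a sum of $\psi$-values on cyclically rotated tuples with identity morphisms inserted at various positions. The natural candidate is $\eta = A\zeta$ for an explicit $\zeta \in CN^n$ assembled from such tuples; since $(1-\lambda)A = 0$, such $\eta$ lies automatically in $C^n_\lambda$. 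The identity $b\eta = b\sigma - n(n+1)b\psi$ is then verified via the Leibniz rule, the intertwiner $bA = Ab'$ of Lemma \ref{5.3g}(1), and the hypothesis $b\psi \in C^{n+1}_\lambda$, which permits any cyclic rotation of $b\psi$ to be traded for a sign.

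The main obstacle is the combinatorial bookkeeping: expanding $\sigma$ via \eqref{compoDG} produces on the order of $n^2$ terms (from merges of adjacent morphisms, insertions of identities, and cyclic shifts), and one must show that all but the two intended contributions telescope away. The factor $n(n+1)$ emerges from counting the cyclic rotations packaged inside $A$ against the admissible positions for the degree-lowering merge introduced by $S$, and the cyclicity of $b\psi$ is precisely what forces these counts to coalesce into a single multiple of $b\psi$.
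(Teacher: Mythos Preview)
Your proof of part~(1) is correct and matches the paper exactly.

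For part~(2), your reduction to finding $\eta \in C^n_\lambda(\mathcal C)$ with $b\eta = b\sigma - n(n+1)\,b\psi$ is the same as the paper's (your $\sigma$ is the paper's $\psi'$). However, your proposed construction of $\eta$ --- expanding $\sigma$ term by term via the composition law in $\Omega\mathcal C$ and then searching for a preimage under $A$ --- is left as an unexecuted combinatorial program, and the paper takes a different and much cleaner route that sidesteps the $O(n^2)$ bookkeeping you anticipate.

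The paper's idea is this. Rather than forcing $\eta$ into the form $A\zeta'$, it sets $\zeta := \psi' - n(n+1)\psi''$ where $\psi'' := \psi - b\psi_1$ differs from $\psi$ by a Hochschild coboundary (so $b\psi'' = b\psi$), and then verifies $(1-\lambda)\zeta = 0$ directly. The correction $\psi_1$ is produced by noting that $\theta'' := B_0\psi - \tfrac{1}{n}B\psi$ satisfies $A\theta'' = 0$, and invoking $Ker(A) \subseteq Im(1-\lambda)$ to write $\theta'' = (1-\lambda)\psi_1$. The verification of $(1-\lambda)\zeta = 0$ then splits into two short, independent computations:
\begin{itemize}
\item $(1-\lambda)\psi'$ is computed using only the closedness $\hat T(\hat\partial\omega_j) = 0$ for $\omega_j = f^0(\hat\partial f^1\cdots\hat\partial f^{j-1})f^j(\hat\partial f^{j+1}\cdots\hat\partial f^{n-1})f^n$; this telescopes to $(-1)^{n+1}(n+1)\,\phi(f^nf^0 \otimes f^1 \otimes \cdots \otimes f^{n-1})$ where $\phi = B\psi$.
\item $(1-\lambda)\psi''$ is computed via $(1-\lambda)b = b'(1-\lambda)$ and the identity $B_0b + b'B_0 = 1-\lambda$ of \eqref{5.1ln}, together with $b\psi \in C^{n+1}_\lambda$, giving $(1-\lambda)\psi'' = \tfrac{1}{n}b'\phi$; since $b\phi = 0$ this equals $\tfrac{(-1)^{n-1}}{n}\phi(f^nf^0 \otimes \cdots \otimes f^{n-1})$.
\end{itemize}
Comparing gives $(1-\lambda)\psi' = n(n+1)(1-\lambda)\psi''$, hence $\zeta \in C^n_\lambda$, and $b\zeta = S(B\psi) - n(n+1)b\psi$ is immediate.

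So the paper never expands $\sigma$ in terms of $\psi$-values at all; the key maneuver is to correct $\psi$ by a Hochschild coboundary so that a direct computation of $(1-\lambda)$ on both pieces closes up. Your brute-force expansion, if carried through, would presumably rediscover this $\zeta$ in disguised form, but as written it is only a hope that the combinatorics telescope, not a proof; the paper's structural argument is what actually makes the step tractable.
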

\begin{proof}
{\it(1)} We know that $(1-\lambda)(B \psi)=(1-\lambda)(AB_0)(\psi)=0$. Further, for any $\phi \in Ker(1-\lambda)$, we have $B_0\phi=0$. Therefore, it follows that $bB\psi=-Bb\psi=-AB_0b\psi=0$.

\smallskip
{\it(2)} We have to show that $SB\psi-n(n+1)b\psi=b\zeta$ for some $\zeta \in C^{n}_\lambda(\mathcal{C})$. We set $\phi=B\psi$. Then, $\phi$ is the character of an $(n-1)$-dimensional cycle $(\mathcal{S},\hat{\partial},\hat{T},\rho)$ over $\mathcal{C}$.  By Proposition \ref{Sbound}, we have
$S\phi=b\psi'$, where $\psi' \in CN^n(\mathcal{C})$ is given by
\begin{equation*}
\psi'(f^0 \otimes \ldots \otimes f^{n})=\sum\limits_{j=1}^n (-1)^{j-1}~ \hat{T}\left(f^0\hat{\partial}f^1 \ldots  \hat{\partial}f^{j-1} f^{j} \hat{\partial}f^{j+1} \ldots \hat{\partial}f^n\right)
\end{equation*}
Suppose we have $\psi'' \in CN^n(\mathcal{C})$ such that $\psi''-\psi \in B^n(\mathcal{C})$ and   $\zeta=\psi'-n(n+1)\psi'' \in C^{n}_\lambda (\mathcal{C})$. This would give
\begin{equation*}
b\zeta=b\psi'-n(n+1)b \psi''=SB\psi-n(n+1)b\psi
\end{equation*}
We set $\theta:=B_0\psi$, $\theta':=\frac{1}{n}\phi$ and $\theta'':=\theta-\theta'\in CN^{n-1}(\mathcal C)$. Since $B\psi \in Z^{n-1}_\lambda(\mathcal{C})$, we have
\begin{equation*}
A\theta''=AB_0\psi-\frac{1}{n}A\phi=B\psi-\frac{1}{n}AB\psi=B\psi-\frac{1}{n}nB\psi=0
\end{equation*}
Since $Ker(A) \subseteq Im(1-\lambda)$, we have $\theta''=(1-\lambda)(\psi_1)$ for some $\psi_1 \in CN^{n-1}(\mathcal{C})$. We take $\psi''=\psi-b\psi_1$. We now show that $(1-\lambda)(\zeta)=0$, i.e, $(1-\lambda)(\psi')=n(n+1)(1-\lambda)(\psi'')$ where $\zeta=\psi'-n(n+1)\psi''$.
We see that
\begin{equation*}
(\tau_n\psi')(f^0 \otimes \ldots \otimes f^{n})=\psi'(f^n \otimes f^0 \otimes \ldots \otimes f^{n-1})=\sum\limits_{j=0}^{n-1} (-1)^{j}~ \hat{T}\left(\hat{\partial}f^0\hat{\partial}f^1 \ldots  \hat{\partial}f^{j-1} f^{j} \hat{\partial}f^{j+1} \ldots \hat{\partial}f^{n-1}f^n\right)
\end{equation*} where we have used the fact that $\hat{T}$ is a graded trace. 
For $1\leq j\leq n-1$, we now set
\begin{equation*}
\omega_j:=f^0(\hat{\partial}f^1 \ldots  \hat{\partial}f^{j-1}) f^{j} (\hat{\partial}f^{j+1} \ldots \hat{\partial}f^{n-1})f^n
\end{equation*}
Then,
\begin{equation*}
\begin{array}{ll}
\hat{\partial}\omega_j&=(\hat{\partial}f^0\hat{\partial}f^1 \ldots  \hat{\partial}f^{j-1}) f^{j} (\hat{\partial}f^{j+1} \ldots \hat{\partial}f^{n-1})f^n+(-1)^{j-1}f^0(\hat{\partial}f^1 \ldots  \hat{\partial}f^{j-1} \hat{\partial}f^{j} \hat{\partial}f^{j+1} \ldots \hat{\partial}f^{n-1})f^n +\\
&\quad (-1)^n f^0(\hat{\partial}f^1 \ldots  \hat{\partial}f^{j-1}) f^{j} (\hat{\partial}f^{j+1} \ldots \hat{\partial}f^{n-1} \hat{\partial}f^n)
\end{array}
\end{equation*}
Thus,
\begin{equation*}
\begin{array}{ll}
0=\hat{T}(\hat{\partial}\omega_j)&=\hat{T}\left(\hat{\partial}f^0\hat{\partial}f^1 \ldots  \hat{\partial}f^{j-1} f^{j} \hat{\partial}f^{j+1} \ldots \hat{\partial}f^{n-1}f^n\right)+(-1)^{j-1}\hat{T}\left(f^0\hat{\partial}f^1 \ldots  \hat{\partial}f^{j-1} \hat{\partial}f^{j} \hat{\partial}f^{j+1} \ldots \hat{\partial}f^{n-1}f^n\right) +\\
&\quad (-1)^n \hat{T}\left(f^0\hat{\partial}f^1 \ldots  \hat{\partial}f^{j-1} f^{j} \hat{\partial}f^{j+1} \ldots \hat{\partial}f^{n-1} \hat{\partial}f^n\right)
\end{array}
\end{equation*}
Therefore,
\begin{equation*}
\begin{array}{ll}
&(1-\lambda)(\psi')(f^0 \otimes \ldots \otimes f^{n})\\
&=-\sum\limits_{j=1}^n (-1)^{j}~ \hat{T}\left(f^0\hat{\partial}f^1 \ldots  \hat{\partial}f^{j-1} f^{j} \hat{\partial}f^{j+1} \ldots \hat{\partial}f^n\right)-
(-1)^n \sum\limits_{j=0}^{n-1} (-1)^{j}~ \hat{T}\left(\hat{\partial}f^0\hat{\partial}f^1 \ldots  \hat{\partial}f^{j-1} f^{j} \hat{\partial}f^{j+1} \ldots \hat{\partial}f^{n-1}f^n\right)\\
&=-\big((-1)^n\hat{T}\left(f^0\hat{\partial}f^1 \ldots   \hat{\partial}f^{n-1}f^n\right)+(-1)^n \hat{T}\left(f^0\hat{\partial}f^1 \ldots  \hat{\partial}f^{n-1}f^n\right) +\\
&\quad \sum\limits_{j=1}^{n-1} (-1)^{j}~\left(\hat{T}\left(f^0\hat{\partial}f^1 \ldots  \hat{\partial}f^{j-1} f^{j} \hat{\partial}f^{j+1} \ldots \hat{\partial}f^n\right) +(-1)^n \hat{T}\left(\hat{\partial}f^0\hat{\partial}f^1 \ldots  \hat{\partial}f^{j-1} f^{j} \hat{\partial}f^{j+1} \ldots \hat{\partial}f^{n-1}f^n\right)\right)\big)\\
&=(-1)^{n+1}~(n+1)\hat{T}(f^nf^0\hat{\partial}f^1 \ldots   \hat{\partial}f^{n-1})=(-1)^{n+1} ~(n+1)\phi(f^nf^0\otimes f^1 \otimes \ldots  \otimes f^{n-1})
\end{array}
\end{equation*}
Hence,
\begin{equation}\label{1c}
(1-\lambda)(\psi')(f^0 \otimes \ldots \otimes f^{n})=(-1)^{n+1} ~(n+1)\phi(f^nf^0\otimes f^1 \otimes \ldots  \otimes f^{n-1})
\end{equation}
On the other hand, using  the definition of $\psi''$ and the fact that $(1-\lambda)b=b'(1-\lambda)$, we have
\begin{equation*}
(1-\lambda)(\psi'')=(1-\lambda)(\psi)-(1-\lambda)(b\psi_1)=(1-\lambda)(\psi)-b'(1-\lambda)(\psi_1)=(1-\lambda)(\psi)-b'\theta''
\end{equation*}
Since $b\psi \in C^{n+1}_\lambda(\mathcal{C})$, we have from \eqref{5.1ln} that $(1-\lambda)(\psi)=(B_0b+b'B_0)(\psi)=b'B_0\psi=b'\theta=b'\theta'+b'\theta''$. Hence, 
\begin{equation*}
(1-\lambda)(\psi'')=b'\theta'=\frac{1}{n}~b'\phi
\end{equation*}
Since $\phi=B\psi \in Z^{n-1}_\lambda(\mathcal{C})$, $b\phi=0$ and therefore
\begin{equation}\label{1d}
(1-\lambda)(\psi'')(f^0 \otimes \ldots \otimes f^{n})=\frac{1}{n}~(b'\phi)(f^0 \otimes \ldots \otimes f^{n})= \frac{1}{n} ~(-1)^{n-1}\phi(f^nf^0 \otimes f^1 \otimes \ldots \otimes f^{n-1})
\end{equation}
The result now follows by comparing \eqref{1c} and \eqref{1d}.
\end{proof}

\begin{prop}\label{Conlemma}
Let $\mathcal C$ be a small $\mathbb{C}$-category  and $\{\mathscr{H}_t:\mathcal{C} \longrightarrow SHilb_2\}_{t \in [0,1]}$ be a family of functors such that
for each $X\in Ob(\mathcal C)$, we have $\mathscr H_t(X)=\mathscr H_{t'}(X)=\mathscr H(X)$ for all $t$, $t'\in [0,1]$. 
For $t \in [0,1]$, we set $\widehat{\mathscr{H}}_t:=i \circ \mathscr{H}_t$, where $i:SHilb_2 \longrightarrow SHilb_{\mathbb{Z}_2}$ is the functor as defined in \eqref{i}. 

\smallskip
Let $\mathcal F$ be the family of operators 
\begin{equation}
\mathcal F=\left\{i(F(\mathscr H(X))=\begin{pmatrix} 0 & 1 \\ 1 & 0 \\
\end{pmatrix}:\mathscr H(X)\oplus\mathscr H(X)\longrightarrow \mathscr H(X)\oplus \mathscr H(X)\right\}_{X\in Ob(\mathcal C)}
\end{equation}

Let $p=2m$ be an even integer. We assume that 

(1)  for each $f \in Hom_\mathcal{C}(X,Y)$, the association  $ t \mapsto [\mathcal{F},\widehat{\mathscr{H}}_t(f)]$ is a continuous map
\begin{equation*}
\zeta_f:[0,1] \longrightarrow \mathcal{B}^p(\mathscr H(X)\oplus \mathscr H(X), \mathscr H(Y)\oplus \mathscr H(Y)) \qquad t \mapsto [\mathcal{F},\widehat{\mathscr{H}}_t(f)]
\end{equation*}

\smallskip
(2) for each $f \in Hom_\mathcal{C}(X,Y)$, the association 
\begin{equation*}
p_f:[0,1] \longrightarrow SHilb_{\mathbb{Z}_2}(\widehat{\mathscr H}_t(X),\widehat{\mathscr H}_t(Y))\qquad t \mapsto \widehat{\mathscr{H}}_t(f)
\end{equation*}
is piecewise strongly $C^1$. 

\smallskip
Let $(\widehat{\mathscr{H}}_t,\mathcal{F})$ be the corresponding $p$-summable Fredholm modules over $\mathcal{C}$. Then, the class in $H^{p+2}_\lambda(\mathcal{C})$ of the $(p+2)$-dimensional character of the Fredholm module $(\widehat{\mathscr{H}}_t,\mathcal{F})$ is independent of $t$.
\end{prop}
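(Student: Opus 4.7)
The plan is to show that for each $t_0 \in [0,1]$ at which $t \mapsto \widehat{\mathscr H}_t(f)$ is $C^1$, the derivative $\frac{d}{dt}\phi^{p+2}_t$ at $t = t_0$ is a cyclic coboundary, and then to integrate over $[0,1]$. By Theorem \ref{evencyc}, writing $h^j_t := \widehat{\mathscr H}_t(f^j)$ and $c^j_t := [\mathcal F, h^j_t]$, we have $\phi^{p+2}_t(f^0 \otimes \cdots \otimes f^{p+2}) = Tr_s\bigl(h^0_t c^1_t \cdots c^{p+2}_t\bigr)$. Hypothesis $(1)$ gives $c^j_t \in \mathcal B^p$ continuously in $t$, and hypothesis $(2)$ gives piecewise strongly $C^1$ dependence of $h^j_t$. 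Writing $\delta_t(f) := p_f'(t)$, termwise differentiation (justified by the $\mathcal B^p$-continuity of the $c^j_t$ and the H\"older estimate of Lemma \ref{srLe1}$(c)$) yields
\begin{equation*}
\frac{d}{dt}\phi^{p+2}_t = Tr_s\bigl(\delta_t(f^0)\, c^1_t \cdots c^{p+2}_t\bigr) + \sum_{j=1}^{p+2} Tr_s\bigl(h^0_t c^1_t \cdots [\mathcal F, \delta_t(f^j)] \cdots c^{p+2}_t\bigr).
\end{equation*}

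The core of the argument is to exhibit a transgression cochain $\psi_t \in C^{p+1}_\lambda(\mathcal C)$ satisfying $b\psi_t = \frac{d}{dt}\phi^{p+2}_t$. Modelled on the cochain used in Proposition \ref{Sbound} (with $f^j$ replaced by $\delta_t(f^j)$ in the ``dropped'' slot), I would take
\begin{equation*}
\psi_t(f^0 \otimes \cdots \otimes f^{p+1}) := \sum_{j=0}^{p+1} (-1)^{\sigma(j)}\, Tr_s\bigl(h^0_t c^1_t \cdots c^{j-1}_t\, \delta_t(f^j)\, c^{j+1}_t \cdots c^{p+1}_t\bigr),
\end{equation*}
with signs $\sigma(j)$ chosen so that (i) $\psi_t$ is cyclic, and (ii) $b\psi_t$ reproduces the derivative above. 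Cyclicity in (i) follows from the graded trace identity of $Tr_s$ (Proposition \ref{srLe2}) combined with the cyclic rotation of the $\delta_t$-slot.

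For (ii), one expands $b\psi_t$ using the Hochschild differential. Each face term $\psi_t(\ldots, f^if^{i+1}, \ldots)$ splits by Lemma \ref{5.2x} as $\delta_t(f^if^{i+1}) = h^i_t \delta_t(f^{i+1}) + \delta_t(f^i) h^{i+1}_t$, producing two pieces that telescope against neighbouring $j$-terms in the sum over $j$. Since $h^j_t = i(\mathscr H_t(f^j))$ has even $\mathbb Z_2$-degree by definition of the functor $i$ in \eqref{i}, the graded commutator of \eqref{gradcomm} collapses to the ordinary commutator, so surviving products of the form $h^i_t \delta_t(f^{i+1})$ adjacent to $\mathcal F$ combine into the desired $[\mathcal F, \delta_t(f^j)]$ via $\mathcal F^2 = \mathrm{id}$. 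The cyclic boundary term $(-1)^{p+3}\psi_t(f^{p+2}f^0, f^1, \ldots, f^{p+1})$ is absorbed into this bookkeeping by the graded cyclicity of $Tr_s$, analogously to the computation in the proof of Proposition \ref{Sbound}.

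Given (i) and (ii), the continuity of $\psi_t$ in $t$ (inherited from hypotheses $(1)$ and $(2)$, and handled piecewise on the smooth subintervals) makes $\Psi := \int_0^1 \psi_t\, dt$ well defined as an element of $C^{p+1}_\lambda(\mathcal C)$, and the fundamental theorem of calculus gives $b\Psi = \phi^{p+2}_1 - \phi^{p+2}_0$. Hence $[\phi^{p+2}_1] = [\phi^{p+2}_0]$ in $H^{p+2}_\lambda(\mathcal C)$; applying the same argument on $[0,s]$ for each $s \in [0,1]$ shows that the class is constant throughout. The main obstacle is the sign bookkeeping required to verify $b\psi_t = \frac{d}{dt}\phi^{p+2}_t$ with $\psi_t$ cyclic: one must precisely match the $p+3$ terms of the derivative against the $(p+3)(p+2)$ terms produced by $b$, the Leibniz rule for $\delta_t$, and the signs coming from the $\mathbb Z_2$-grading. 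This is structurally analogous to, but more delicate than, the transgression computation in Proposition \ref{Sbound}.
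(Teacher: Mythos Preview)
Your proposed transgression does not work: the cochain $\psi_t$ you write down actually satisfies $b\psi_t = 0$, not $b\psi_t = \frac{d}{dt}\phi^{p+2}_t$. Each summand $\psi_t^j(f^0\otimes\cdots\otimes f^{p+1}) = Tr_s\bigl(h^0_t c^1_t\cdots c^{j-1}_t\,\delta_t(f^j)\,c^{j+1}_t\cdots c^{p+1}_t\bigr)$ is separately a Hochschild cocycle; the derivation property $\delta_t(f^if^{i+1}) = h^i_t\delta_t(f^{i+1}) + \delta_t(f^i)h^{i+1}_t$ from Lemma~\ref{5.2x} together with the commutator Leibniz rule make all the face terms of $b\psi_t^j$ telescope to zero, just as in the standard computation that $b$ of a ``derivation cochain'' vanishes. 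Since each $b\psi_t^j=0$, no choice of signs $\sigma(j)$ can produce a nonzero $b\psi_t$. Your $\psi_t$ is also not cyclic: the slot carrying the undifferentiated $h^0_t$ is fixed at position~$0$, and cyclic rotation moves it, so $(1-\lambda)\psi_t\neq 0$ in general (compare the explicit computation of $(1-\lambda)\psi'$ in the proof of Proposition~\ref{prop5.5h}).

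The paper's argument is structurally different and uses exactly the cocycle property you missed. One works with the $p$-dimensional character $\phi_t=\phi^p_t$, not $\phi^{p+2}_t$. The integrated cochain $\psi=\int\psi_t\,dt\in CN^{p+1}(\mathcal C)$ is a Hochschild cocycle ($b\psi=0$), and one checks directly that inserting an identity in slot~$0$ gives $\psi(id_{X_0}\otimes f^0\otimes\cdots\otimes f^p)=\int\phi_t'(f^0\otimes\cdots\otimes f^p)\,dt = (\phi_{t_2}-\phi_{t_1})(f^0\otimes\cdots\otimes f^p)$, while inserting $id_{X_0}$ in the last slot gives zero. Hence $B_0\psi=\phi_{t_2}-\phi_{t_1}$, and since this difference is already cyclic, $B\psi=(p+1)(\phi_{t_2}-\phi_{t_1})$. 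Now Proposition~\ref{prop5.5h} (with $b\psi=0$) gives $S(B\psi)=0$ in $H^{p+2}_\lambda(\mathcal C)$, i.e.\ $S\phi_{t_1}=S\phi_{t_2}$, and the periodicity relation $S\phi^{2m}=-(m+1)\phi^{2m+2}$ then transfers this to $\phi^{p+2}_t$. The passage through $B$ and $S$ is not optional bookkeeping: it is precisely what bridges the gap between ``$b\psi=0$'' and ``the variation of the character is a coboundary in degree $p+2$''.
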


\begin{proof}
For any $t \in [0,1]$, let $\phi_{t}$ be the $p$-dimensional character of the Fredholm module $(\widehat{\mathscr{H}}_{t},\mathcal{F})$. We will show that $S(\phi_{t_1})=S(\phi_{t_2})$ for any $t_1, t_2 \in [0,1]$.

\smallskip
By assumption, we know that there exists a finite set $R=\{0= r_0\leq r_1 < ... <r_k\leq r_{k+1}=1\}\subseteq [0,1]$ such that $p_f:[0,1] \longrightarrow SHilb_{\mathbb{Z}_2}(\widehat{\mathscr H}_t(X),\widehat{\mathscr H}_t(Y))$ is continuously differentiable in each $[r_i,r_{i+1}]$. By abuse of notation,
we set for each $f \in Hom_\mathcal{C}(X,Y)$:
\begin{equation}
\delta_t(f):=p_f'(t)\in SHilb_{\mathbb{Z}_2}(\widehat{\mathscr H}_t(X),\widehat{\mathscr H}_t(Y))
\end{equation} Here, it is understood that if $t=r_i$ for some $1\leq i\leq k$, we use the right hand derivative when $r_i$ is treated as a point
of $[r_i,r_{i+1}]$ and the left hand derivative when $r_i$ is treated as a point of $[r_{i-1},r_i]$. 

\smallskip
Using Lemma \ref{5.2x},  we know that 
\begin{equation}\label{nv2}
\delta_t(fg)=\widehat{\mathscr{H}}_t(f) \circ \delta_t(g) + \delta_t(f) \circ \widehat{\mathscr{H}}_t
(g)
\end{equation}
for any $t \in [0,1]$ and for any pair of composable morphisms $f$ and $g$ in $\mathcal{C}$.

\smallskip
For any $t \in [0,1]$ and $1 \leq j \leq p+1$, we set
\begin{equation*}
\psi_t^j(f^0 \otimes \ldots \otimes f^{p+1}):=Tr\left(\epsilon \widehat{\mathscr{H}}_t(f^0)[\mathcal{F}, \widehat{\mathscr{H}}_t(f^1)]\ldots [\mathcal{F}, \widehat{\mathscr{H}}_t(f^{j-1})]\delta_t(f^j)[\mathcal{F}, \widehat{\mathscr{H}}_t(f^{j+1})] \ldots [\mathcal{F}, \widehat{\mathscr{H}}_t(f^{p+1})]\right)
\end{equation*}
Using the expression in \eqref{nv2} and the fact that $\epsilon \widehat{\mathscr H}(f)=\widehat{\mathscr H}(f)\epsilon$ for any morphism $f \in \mathcal{C}$,  it may be easily verified that $b\psi_t^j=0$. For example, when $j=1$,  we have (suppressing the  functor $\widehat{\mathscr H}$ )
\begin{equation*}
\begin{array}{ll}
&(b\psi_t^1)(f^0 \otimes \ldots \otimes f^{p+2})\\
& \quad =\sum\limits_{i=0}^{p+1} \psi_t^1(f^0 \otimes \ldots f^if^{i+1} \otimes \ldots \otimes f^{p+2}) + \psi_t^j(f^{p+2}f^0 \otimes f^1 \otimes \ldots \otimes f^{p+2})\\
& \quad = Tr\left(\epsilon f^0f^1\delta_t(f^2)[\mathcal{F},f^3]\ldots[\mathcal{F},f^{p+2}]\right)-Tr\left(\epsilon f^0 \delta_t(f^1f^2)[\mathcal{F},f^3]\ldots[\mathcal{F},f^{p+2}]\right)\\
& \qquad + Tr\left(\epsilon f^0 \delta_t(f^1)[\mathcal{F},f^2f^3]\ldots[\mathcal{F},f^{p+2}]\right)-Tr\left(\epsilon f^0 \delta_t(f^1)[\mathcal{F},f^2][\mathcal{F},f^3f^4]\ldots[\mathcal{F},f^{p+2}]\right) + \ldots\\
& \qquad \ldots - Tr\left(\epsilon f^0 \delta_t(f^1)[\mathcal{F},f^2]\ldots[\mathcal{F},f^{p+1}f^{p+2}]\right) + Tr\left(\epsilon f^{p+2}f^0 \delta_t(f^1)[\mathcal{F},f^2][\mathcal{F},f^3f^4]\ldots[\mathcal{F},f^{p+1}]\right) \\
&\quad =0
\end{array}
\end{equation*}
 We then define
\begin{equation*}
\begin{array}{ll}
\psi_t:=\sum\limits_{j=0}^{p+1}  (-1)^{j-1} \psi_t^j
\end{array}
\end{equation*}
We have  $b\psi_t=0$.

For fixed $f$, it follows from the compactness of $[0,1]$ and the assumptions (1) and (2) that the families $\{ \widehat{\mathscr{H}}_t(f) \}_{t \in [0,1]}$, 
$\{p_f(t)\}_{t\in [0,1]}$ and 
$\{\delta_t(f)\}_{t \in [0,1]}$ are uniformly bounded. For the sake of simplicity, we assume that there is only a single point $r\in R$ such that 
$t_1\leq r\leq t_2$. Then, we form  $\psi \in CN^{p+1}(\mathcal{C})$ by setting
\begin{equation*}
\psi(f^0 \otimes \ldots \otimes f^{p+1}):=\int_{t_1}^{r}\psi_t(f^0 \otimes \ldots \otimes f^{p+1})dt+\int_{r}^{t_2}\psi_t(f^0 \otimes \ldots \otimes f^{p+1})dt
\end{equation*}
We now have
\begin{equation*}
\begin{array}{ll}
&\psi(id_{X_0} \otimes f^0 \otimes \ldots \otimes f^{p})\\
&=\int_{t_1}^{r}\psi_t(id_{X_0} \otimes f^0 \otimes \ldots \otimes f^{p})dt+\int_{r}^{t_2}\psi_t(id_{X_0} \otimes f^0 \otimes \ldots \otimes f^{p})dt\\
&=\int_{t_1}^{r} \big(\sum\limits_{j=0}^{p}(-1)^{j}Tr\left(\epsilon[\mathcal{F}, \widehat{\mathscr{H}}_t(f^0)]\ldots [\mathcal{F}, \widehat{\mathscr{H}}_t(f^{j-1})]\delta_t(f^j)[\mathcal{F}, \widehat{\mathscr{H}}_t(f^{j+1})] \ldots [\mathcal{F}, \widehat{\mathscr{H}}_t(f^{p})]\right)\big)dt\\&\textrm{ }+\int_{r}^{t_2} \big(\sum\limits_{j=0}^{p}(-1)^{j}Tr\left(\epsilon[\mathcal{F}, \widehat{\mathscr{H}}_t(f^0)]\ldots [\mathcal{F}, \widehat{\mathscr{H}}_t(f^{j-1})]\delta_t(f^j)[\mathcal{F}, \widehat{\mathscr{H}}_t(f^{j+1})] \ldots [\mathcal{F}, \widehat{\mathscr{H}}_t(f^{p})]\right)\big)dt\\
\end{array}
\end{equation*}
Let $\phi:[0,1] \longrightarrow Z^p_\lambda(\mathcal{C})$ be the map given by $t \mapsto \phi_t$. We now claim that 
\begin{equation*}
\psi(id_{X_0} \otimes f^0 \otimes \ldots \otimes f^{p})=\int_{t_1}^{r}\phi'(t)(f^0 \otimes \ldots \otimes f^p) ~dt+\int_{r}^{t_2}\phi'(t)(f^0 \otimes \ldots \otimes f^p) ~dt
\end{equation*}
 Indeed, we have
\begin{equation*}
\begin{array}{ll}
\phi'(t)(f^0 \otimes \ldots \otimes f^p)&=\lim\limits_{s \to 0}\frac{1}{s}(\phi_{t+s}-\phi_t)(f^0 \otimes \ldots \otimes f^p)\\
&=\lim\limits_{s \to 0}\big(Tr\big(\epsilon \frac{1}{s}\left( \widehat{\mathscr{H}}_{t+s}(f^0)-\widehat{\mathscr{H}}_t(f^0)\right)  [\mathcal{F}, \widehat{\mathscr{H}}_{t+s}(f^{1})] \ldots  [\mathcal{F}, \widehat{\mathscr{H}}_{t+s}(f^{p})]\big)\\
& \quad +Tr\big(\epsilon \widehat{\mathscr{H}}_{t}(f^0) [\mathcal{F},\frac{1}{s}\left(\widehat{\mathscr{H}}_{t+s}(f^1)-\widehat{\mathscr{H}}_t(f^1)\right)]  [\mathcal{F}, \widehat{\mathscr{H}}_{t+s}(f^{2})] \ldots  [\mathcal{F}, \widehat{\mathscr{H}}_{t+s}(f^{p})]\big)+ \ldots\\
& \quad +Tr\big(\epsilon \widehat{\mathscr{H}}_{t}(f^0)[\mathcal{F}, \widehat{\mathscr{H}}_{t}(f^1) ] \ldots [\mathcal{F},\frac{1}{s}\left(\widehat{\mathscr{H}}_{t+s}(f^p)-\widehat{\mathscr{H}}_t(f^p)\right)]\big)\big)
\end{array}
\end{equation*}
By {\it(1)}, we know that the association $t \mapsto [\mathcal{F},\widehat{\mathscr{H}}_{t}(f)]$ is a continuous map for each morphism $f \in \mathcal{C}$. Therefore, we have
\begin{equation*}
\begin{array}{ll}
\lim\limits_{s \to 0}\big(Tr\big(\epsilon \widehat{\mathscr{H}}_{t}(f^0)[\mathcal{F}, \widehat{\mathscr{H}}_{t}(f^1)]\ldots [\mathcal{F}, \widehat{\mathscr{H}}_{t}(f^{j-1})] [\mathcal{F},\frac{1}{s}\left(\widehat{\mathscr{H}}_{t+s}(f^{j})-\widehat{\mathscr{H}}_t(f^j)\right)] \ldots  [\mathcal{F}, \widehat{\mathscr{H}}_{t+s}(f^{p})]\big)\big)=\\
=\lim\limits_{s \to 0} (-1)^j \big(Tr\big(\epsilon [\mathcal F,\widehat{\mathscr{H}}_{t}(f^0)]\ldots [\mathcal{F}, \widehat{\mathscr{H}}_{t}(f^{j-1})] \frac{1}{s}\left(\widehat{\mathscr{H}}_{t+s}(f^{j})-\widehat{\mathscr{H}}_t(f^j)\right)[\mathcal{F}, \widehat{\mathscr{H}}_{t+s}(f^{j+1})]  \ldots  [\mathcal{F}, \widehat{\mathscr{H}}_{t+s}(f^{p})]\big)\big)\\
= (-1)^j Tr\big(\epsilon [\mathcal F,\widehat{\mathscr{H}}_{t}(f^0)][\mathcal{F}, \widehat{\mathscr{H}}_{t}(f^1)]\ldots [\mathcal{F}, \widehat{\mathscr{H}}_{t}(f^{j-1})] \delta_t(f^j) [\mathcal{F}, \widehat{\mathscr{H}}_{t}(f^{j+1})] \ldots  [\mathcal{F}, \widehat{\mathscr{H}}_{t}(f^{p})]\big)
\end{array}
\end{equation*}
From this, we obtain
\begin{equation*}
\begin{array}{ll}
&\int_{t_1}^{r}\phi'(t) (f^0 \otimes \ldots \otimes f^p) ~dt+\int_{r}^{t_2}\phi'(t)  (f^0 \otimes \ldots \otimes f^p)~dt\\
&=\int_{t_1}^{r} \sum\limits_{j=0}^p (-1)^j Tr\big(\epsilon [\mathcal F,\widehat{\mathscr{H}}_{t}(f^0)][\mathcal{F}, \widehat{\mathscr{H}}_{t}(f^1)]\ldots [\mathcal{F}, \widehat{\mathscr{H}}_{t}(f^{j-1})] \delta_t(f^j)[\mathcal{F}, \widehat{\mathscr{H}}_{t}(f^{j+1})]  \ldots  [\mathcal{F}, \widehat{\mathscr{H}}_{t}(f^{p})]\big)~ dt\\
&\textrm{ }+\int_{r}^{t_2} \sum\limits_{j=0}^p (-1)^j Tr\big(\epsilon [\mathcal F,\widehat{\mathscr{H}}_{t}(f^0)][\mathcal{F}, \widehat{\mathscr{H}}_{t}(f^1)]\ldots [\mathcal{F}, \widehat{\mathscr{H}}_{t}(f^{j-1})] \delta_t(f^j) [\mathcal{F}, \widehat{\mathscr{H}}_{t}(f^{j+1})] \ldots  [\mathcal{F}, \widehat{\mathscr{H}}_{t}(f^{p})]\big)~ dt\\
&=\psi(id_{X_0} \otimes f^0 \otimes \ldots \otimes f^{p})
\end{array}
\end{equation*}
Hence
\begin{equation*}
\begin{array}{ll}
\psi(id_{X_0} \otimes f^0 \otimes \ldots \otimes f^{p})&=\phi_{t_2}(f^0 \otimes \ldots \otimes f^{p})-\phi_{r}(f^0 \otimes \ldots \otimes f^{p})+\phi_{r}(f^0 \otimes \ldots \otimes f^{p})-\phi_{t_1}(f^0 \otimes \ldots \otimes f^{p})\\
&=\phi_{t_2}(f^0 \otimes \ldots \otimes f^{p})-\phi_{t_1}(f^0 \otimes \ldots \otimes f^{p})\\
\end{array}
\end{equation*}
Since $\psi(f^0 \otimes \ldots \otimes f^{p} \otimes id_{X_0} )=0$, we now have
\begin{equation*}
\begin{array}{ll}
(B_0\psi) (f^0 \otimes \ldots \otimes f^{p})&=\psi(id_{X_0} \otimes f^0 \otimes \ldots \otimes f^{p})-\psi(f^0 \otimes \ldots \otimes f^{p} \otimes id_{X_0} )\\
&=(\phi_{t_2}-\phi_{t_1})(f^0 \otimes \ldots \otimes f^{p})
\end{array}
\end{equation*}
Since $b\psi=0$, using Proposition \ref{prop5.5h} and the fact that $\phi_{t_2}-\phi_{t_1} \in Ker(1-\lambda)$, we have
\begin{equation*}
0=S(B\psi)=S(AB_0\psi)=(p+1)S(\phi_{t_2}-\phi_{t_1})
\end{equation*}
This completes the proof.
\end{proof}

\begin{theorem}\label{Thm5.6t}
Let $\mathcal C$ be a small $\mathbb{C}$-category  and $\{\rho_t:\mathcal{C} \longrightarrow SHilb_2\}_{t \in [0,1]}$ be a family of functors such that
for each $X\in Ob(\mathcal C)$, we have $\rho_t(X)=\rho_{t'}(X)=\rho(X)$ for all $t$, $t'\in [0,1]$. 
For $t \in [0,1]$, we set $\tilde{\rho}_t:=i \circ \rho_t$, where $i:SHilb_2 \longrightarrow SHilb_{\mathbb{Z}_2}$ is the functor as defined in \eqref{i}.  Further, for each $t \in [0,1]$,
let 
\begin{equation}
\mathcal{F}_t:=\left\lbrace \mathcal F_t(X):=\begin{pmatrix} 0 & \mathcal{Q}_t(X) \\ \mathcal{P}_t(X) & 0 \\
\end{pmatrix}:\rho(X)\oplus\rho (X)\longrightarrow \rho(X)\oplus \rho(X) \right\rbrace_{X\in Ob(\mathcal C)}
\end{equation}
with $\mathcal{P}_t(X)=\mathcal{Q}_t^{-1}(X)$ be such that $(\tilde{\rho}_t,\mathcal{F}_t)$ is a $p$-summable Fredholm module over the category $\mathcal{C}$.
We further assume that for some even integer $p$ and for any $f \in Hom_\mathcal{C}(X,Y)$,

\smallskip
(1) $t \mapsto \tilde{\rho}_t^+(f)-\mathcal{Q}_t\tilde{\rho}_t^-(f)\mathcal{P}_t$ is a continuous map from $[0,1]$ to $\mathcal{B}^p({\rho}(X),{\rho}(Y))$.

\smallskip
(2) $t \mapsto \tilde{\rho}_t^+(f)$ and $t \mapsto \mathcal{Q}_t\tilde{\rho}_t^-(f)\mathcal{P}_t$ are piecewise strongly $C^1$ maps from $[0,1]$ to $SHilb(\rho(X),\rho(Y))$.

\smallskip
Then, $\text{ch}^{p+2}(\tilde{\rho}_t,\mathcal{F}_t) \in H^{p+2}(\mathcal{C})$ is independent of $t \in [0,1]$.
\end{theorem}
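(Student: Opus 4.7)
The plan is to reduce the theorem to Proposition \ref{Conlemma} by conjugating the varying grading operators $\mathcal{F}_t$ into the fixed swap operator $\mathcal{F} := \bigl(\begin{smallmatrix} 0 & 1 \\ 1 & 0 \end{smallmatrix}\bigr)$. For each $X \in Ob(\mathcal{C})$, set $V_t(X) := \bigl(\begin{smallmatrix} 1 & 0 \\ 0 & \mathcal{Q}_t(X) \end{smallmatrix}\bigr)$, a bounded invertible operator on $\rho(X) \oplus \rho(X)$ with inverse $V_t(X)^{-1} = \bigl(\begin{smallmatrix} 1 & 0 \\ 0 & \mathcal{P}_t(X) \end{smallmatrix}\bigr)$. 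Using $\mathcal{P}_t(X) = \mathcal{Q}_t(X)^{-1}$, a direct calculation gives $V_t(X) \mathcal{F}_t(X) V_t(X)^{-1} = \mathcal{F}$, while $V_t(X)$ is degree zero, hence commutes with the grading operator $\epsilon$.

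Using these, define $\widehat{\mathscr{H}}_t: \mathcal{C} \longrightarrow SHilb_{\mathbb{Z}_2}$ by $\widehat{\mathscr{H}}_t(X) := \rho(X) \oplus \rho(X)$ and $\widehat{\mathscr{H}}_t(f) := V_t(Y)\tilde{\rho}_t(f)V_t(X)^{-1}$ for $f \in Hom_{\mathcal C}(X,Y)$. Functoriality is immediate by telescoping, and expanding in block form with $\tilde{\rho}_t(f) = \bigl(\begin{smallmatrix} \tilde\rho_t^+(f) & 0 \\ 0 & \tilde\rho_t^-(f) \end{smallmatrix}\bigr)$ yields $\widehat{\mathscr{H}}_t(f) = \bigl(\begin{smallmatrix} \tilde\rho_t^+(f) & 0 \\ 0 & \mathcal{Q}_t(Y)\tilde\rho_t^-(f)\mathcal{P}_t(X) \end{smallmatrix}\bigr)$. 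The off-diagonal blocks of $[\mathcal{F}, \widehat{\mathscr{H}}_t(f)]$ are then $\pm(\tilde\rho_t^+(f) - \mathcal{Q}_t\tilde\rho_t^-(f)\mathcal{P}_t)$, so $(\widehat{\mathscr{H}}_t, \mathcal{F})$ is a $p$-summable even Fredholm module over $\mathcal{C}$, and hypotheses $(1)$, $(2)$ of the theorem translate exactly into the continuity in $\mathcal{B}^p$ and piecewise strongly $C^1$ hypotheses of Proposition \ref{Conlemma} for the family $(\widehat{\mathscr{H}}_t, \mathcal{F})$.

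Next, I claim that the $(p+2)$-dimensional characters of $(\widehat{\mathscr{H}}_t, \mathcal{F})$ and $(\tilde{\rho}_t, \mathcal{F}_t)$ agree as elements of $Z^{p+2}_\lambda(\mathcal{C})$. From $\mathcal{F} V_t = V_t \mathcal{F}_t$ one gets $[\mathcal{F}, V_t(Y) A V_t(X)^{-1}] = V_t(Y)[\mathcal{F}_t, A]V_t(X)^{-1}$ for any operator $A$. Telescoping this identity, the operator computing the $(p+2)$-dimensional character at $(\widehat{\mathscr{H}}_t, \mathcal{F})$ equals $V_t(X_0) \tilde{T} V_t(X_0)^{-1}$, where $\tilde{T} := \tilde{\rho}_t(f^0)[\mathcal{F}_t, \tilde{\rho}_t(f^1)] \cdots [\mathcal{F}_t, \tilde{\rho}_t(f^{p+2})]$ is the corresponding operator for $(\tilde\rho_t, \mathcal F_t)$. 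Since $V_t$ commutes with $\epsilon$, using $\mathcal{F} V_t = V_t \mathcal F_t$ once more together with cyclicity of the trace on the trace-class operator $\epsilon\mathcal{F}_t[\mathcal{F}_t, \tilde{T}]$ gives $\tfrac{1}{2}Tr(\epsilon \mathcal{F}[\mathcal{F}, V_t\tilde T V_t^{-1}]) = \tfrac{1}{2}Tr(\epsilon \mathcal{F}_t[\mathcal{F}_t, \tilde{T}])$, proving the claim.

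Applying Proposition \ref{Conlemma} to the family $(\widehat{\mathscr{H}}_t, \mathcal{F})$ then shows that the class in $H^{p+2}_\lambda(\mathcal{C})$ of its $(p+2)$-dimensional character is independent of $t$, and the equality of characters above transports this independence to $\mathrm{ch}^{p+2}(\tilde{\rho}_t, \mathcal{F}_t)$. The main obstacle is essentially bookkeeping: confirming that conditions $(1)$ and $(2)$ really translate, through the conjugation by $V_t$, into the hypotheses of Proposition \ref{Conlemma}; this uses the implicit uniform boundedness of $\mathcal{Q}_t$ and $\mathcal{P}_t$ on $[0,1]$ (needed already for $\mathcal{F}_t$ to define a bounded Fredholm module), so that $\mathcal{B}^p$-continuity and piecewise strong $C^1$-regularity pass cleanly through the similarity.
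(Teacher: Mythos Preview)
Your proof is correct and follows essentially the same approach as the paper: conjugate by $V_t = \bigl(\begin{smallmatrix} 1 & 0 \\ 0 & \mathcal{Q}_t \end{smallmatrix}\bigr)$ to transform $\mathcal{F}_t$ into the fixed swap operator, verify that hypotheses (1) and (2) become those of Proposition~\ref{Conlemma}, and invoke similarity invariance of the trace to transport the conclusion back. Your write-up is in fact slightly more careful than the paper's---you distinguish the source and target conjugating operators $V_t(X)$, $V_t(Y)$, spell out the equality of characters rather than just citing similarity invariance, and flag the implicit uniform boundedness of $\mathcal{Q}_t$, $\mathcal{P}_t$---but the underlying argument is the same.
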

\begin{proof}
For each $t \in [0,1]$, we set $\mathcal{T}_t:=\begin{pmatrix} 1 & 0 \\ 0 & \mathcal{Q}_t \\
\end{pmatrix}$. Then, $\mathcal{T}_t^{-1}=\begin{pmatrix} 1 & 0 \\ 0 & \mathcal{P}_t \\
\end{pmatrix}$ and $\mathcal{F}'_t:=\mathcal{T}_t\mathcal{F}_t\mathcal{T}_t^{-1}=\begin{pmatrix} 0 & 1\\ 1& 0 \\
\end{pmatrix}.$

\smallskip
For each $t \in [0,1]$, we also define a functor $\widehat{\mathscr H}_t:\mathcal{C} \longrightarrow SHilb_{\mathbb{Z}_2}$ given by
\begin{equation*}
\widehat{\mathscr H}_t(X):=\widehat{\rho}(X) \qquad \widehat{\mathscr H}_t(f):=\mathcal{T}_t\widehat{\rho}_t(f)\mathcal{T}_t^{-1}
\end{equation*}
Then, we have
\begin{equation*}
[\mathcal{F}'_t,\widehat{\mathscr H}_t(f)]=\begin{pmatrix} 0 & \mathcal{Q}_t\widehat{\rho_t}^-(f)\mathcal{P}_t -\widehat{\rho}_t^+(f)\\ \widehat{\rho}_t^+(f)- \mathcal{Q}_t\widehat{\rho_t}^-(f)\mathcal{P}_t  & 0 
\end{pmatrix}
\end{equation*}
Therefore, using assumption {\it(1)}, we see that the map $t \mapsto [\mathcal{F}',\widehat{\mathscr H}_t(f)]$ from $[0,1]$ to $\mathcal{B}^p(\widehat{\mathscr H}(X),\widehat{\mathscr H}(Y))$ is continuous for each $f \in Hom_\mathcal{C}(X,Y)$. Further,
\begin{equation*}
\widehat{\mathscr H}_t(f)=\mathcal{T}_t\widehat{\rho}_t(f)\mathcal{T}_t^{-1}=\begin{pmatrix} \tilde{\rho}_t^+(f) & 0\\ 0  &  \mathcal{Q}_t\tilde{\rho_t}^-(f)\mathcal{P}_t
\end{pmatrix}
\end{equation*}
Therefore, by applying assumption {\it(2)}, we see that the map $t \mapsto \widehat{\mathscr H}_t(f)$ is piecewise strongly $C^1$. Since trace is invariant under similarity, the result now follows using Proposition \ref{Conlemma}.
\end{proof}

\begin{theorem}
Let $\mathcal C$ be a small $\mathbb{C}$-category  and $\{\rho_t:\mathcal{C} \longrightarrow SHilb_2\}_{t \in [0,1]}$ be a family of functors such that
for each $X\in Ob(\mathcal C)$, we have $\rho_t(X)=\rho_{t'}(X)=\rho(X)$ for all $t$, $t'\in [0,1]$. 
For $t \in [0,1]$, we set $\tilde{\rho}_t:=i \circ \rho_t$, where $i:SHilb_2 \longrightarrow SHilb_{\mathbb{Z}_2}$ is the functor as defined in \eqref{i}.  Further, for each $t \in [0,1]$ and $X\in Ob(\mathcal C)$, 
let 
\begin{equation}
\mathcal F_t(X):=\begin{pmatrix} 0 & \mathcal{Q}_t(X) \\ \mathcal{P}_t(X) & 0 \\
\end{pmatrix}:\rho(X)\oplus\rho (X)\longrightarrow \rho(X)\oplus \rho(X)
\end{equation}
with $\mathcal{Q}_t^{-1}=\mathcal{P}_t$ be such that $(\tilde{\rho}_t,\mathcal{F}_t)$ is a $p$-summable Fredholm module over the category $\mathcal{C}$.
We further assume that for some even integer $p$, we have

\smallskip
(1) For any $f \in Hom_\mathcal{C}(X,Y)$, $t \mapsto \tilde{\rho}_t(f)$ is a strongly $C^1$-map from $[0,1]$ to $SHilb_{\mathbb Z_2}(\tilde{\rho}(X),\tilde{\rho}(Y))$.

\smallskip
(2) For any $X\in \mathcal C$, $t \mapsto \mathcal F_t(X)$   is a strongly $C^1$-map from $[0,1]$ to $SHilb_{\mathbb Z_2}(\tilde{\rho}(X),\tilde{\rho}(X))$.

\smallskip
Then, $\text{ch}^{p+2}(\tilde{\rho}_t,\mathcal{F}_t) \in H^{p+2}(\mathcal{C})$ is independent of $t \in [0,1]$.
\end{theorem}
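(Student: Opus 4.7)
The plan is to reduce the theorem to Theorem \ref{Thm5.6t} by verifying that the stronger strongly $C^1$ conditions assumed here imply the two hypotheses of Theorem \ref{Thm5.6t}. Once both hypotheses of Theorem \ref{Thm5.6t} are established, the conclusion that $\mathrm{ch}^{p+2}(\tilde\rho_t,\mathcal F_t)\in H^{p+2}(\mathcal C)$ is independent of $t$ follows immediately, so the work is concentrated in checking these two conditions.

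First I would verify hypothesis (2) of Theorem \ref{Thm5.6t}, namely that $t\mapsto \tilde\rho_t^+(f)$ and $t\mapsto \mathcal{Q}_t\tilde\rho_t^-(f)\mathcal{P}_t$ are piecewise strongly $C^1$. The strong $C^1$ assumption on $t\mapsto \mathcal F_t(X)$ makes the off-diagonal block $t\mapsto\mathcal Q_t$ strongly $C^1$. Since $\mathcal{P}_t=\mathcal{Q}_t^{-1}$ and inversion is $C^1$ on the open set of bounded invertibles (with $\|\mathcal{P}_t\|$ locally uniformly bounded on $[0,1]$), $t\mapsto\mathcal{P}_t$ is strongly $C^1$ as well; likewise the hypothesis on $\tilde\rho_t(f)$ yields strongly $C^1$ diagonal blocks $t\mapsto\tilde\rho_t^\pm(f)$. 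The product rule for strongly differentiable families (valid since all factors are locally uniformly bounded in the operator norm) then gives that $t\mapsto \mathcal{Q}_t\tilde\rho_t^-(f)\mathcal{P}_t$ is strongly $C^1$, hence in particular piecewise strongly $C^1$.

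Next I would verify hypothesis (1) of Theorem \ref{Thm5.6t}, which requires that
\[
A_t := \tilde\rho_t^+(f) - \mathcal Q_t\tilde\rho_t^-(f)\mathcal P_t
\]
is continuous as a function of $t$ into $\mathcal B^p(\rho(X),\rho(Y))$. A direct matrix computation identifies $A_t$, up to sign, with the appropriate block of the graded commutator $[\mathcal F_t,\tilde\rho_t(f)]$ post-composed with $\mathrm{id}\oplus\mathcal P_t$, so that $A_t\in\mathcal B^p$ for each $t$ by the $p$-summability of $(\tilde\rho_t,\mathcal F_t)$ together with the ideal property of $\mathcal B^p$ in $SHilb$. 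To upgrade this pointwise membership to $\mathcal B^p$-norm continuity I would compute the formal derivative
\[
A_t'=(\tilde\rho_t^+(f))' - \mathcal Q_t'\tilde\rho_t^-(f)\mathcal P_t - \mathcal Q_t(\tilde\rho_t^-(f))'\mathcal P_t - \mathcal Q_t\tilde\rho_t^-(f)\mathcal P_t',
\]
and rewrite each summand, using the strongly $C^1$ derivatives of $\mathcal F_t$ and $\tilde\rho_t(f)$, so that at least one factor of each term is a commutator of the form $[\mathcal F_u,-]$ applied to a locally uniformly bounded family; by $p$-summability such commutators lie in $\mathcal B^p$, and the ideal property absorbs the remaining bounded factors. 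The fundamental theorem of calculus in $\mathcal B^p$ then yields the estimate $\|A_t-A_s\|_{\mathcal B^p}\le |t-s|\sup_{u\in[s,t]}\|A_u'\|_{\mathcal B^p}$, which gives the required continuity.

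The main obstacle is precisely this last step, namely passing from mere strong differentiability of the data $\tilde\rho_t(f)$ and $\mathcal F_t(X)$ to genuine $\mathcal B^p$-norm differentiability of $A_t$. This demands careful bookkeeping to express each term of $A_t'$ as a product in which a Schatten-class factor is exposed via a commutator with $\mathcal F_u$ (or $\mathcal F_u'$), combined with local uniform bounds on the remaining factors coming from compactness of $[0,1]$. Once condition (1) of Theorem \ref{Thm5.6t} is thereby secured, an appeal to that theorem completes the proof.
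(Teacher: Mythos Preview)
Your approach matches the paper's exactly: both reduce to Theorem~\ref{Thm5.6t} by arguing that the strongly $C^1$ hypotheses imply its two conditions. The paper's own proof is a single sentence asserting that this implication ``is clear,'' with no further detail; you are more explicit, and in particular you correctly isolate the $\mathcal B^p$-continuity of $A_t=\tilde\rho_t^+(f)-\mathcal Q_t\tilde\rho_t^-(f)\mathcal P_t$ as the nontrivial point.

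One caution about your sketch for that step: your plan is to differentiate $A_t$ and exhibit a Schatten-class factor in each summand of $A_t'$ via a commutator $[\mathcal F_u,-]$. But the $p$-summability hypothesis only tells you $[\mathcal F_t,\tilde\rho_t(f)]\in\mathcal B^p$; it says nothing about commutators involving the \emph{derivatives} $(\tilde\rho_t(f))'$ or $\mathcal F_t'$, so a combination like $(\tilde\rho_t^+(f))'-\mathcal Q_t(\tilde\rho_t^-(f))'\mathcal P_t$ has no evident Schatten factor coming from the stated hypotheses. The paper does not address this point at all, so you have not diverged from its argument---you have simply been more candid about where the analytic work is left to the reader.
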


\begin{proof} By definition, $\tilde{\rho}_t(f)=\begin{pmatrix} \tilde\rho^+(f) & 0 \\ 
0 & \tilde\rho^-(f)\\ \end{pmatrix}$ and $\mathcal F_t(X)=\begin{pmatrix} 0 & \mathcal{Q}_t(X) \\ \mathcal{P}_t(X) & 0 \\
\end{pmatrix}$. As such, it is clear that a system satisfying the assumptions (1) and (2) above also satisfies the assumptions in Theorem \ref{Thm5.6t}. This proves the result.

\end{proof}

\section{Odd Fredholm modules over categories}
In this section, we will introduce odd Fredholm modules over a small $\mathbb{C}$-category $\mathcal{C}$ and show that they lead to odd cyclic cocycles over $\mathcal{C}$. 

\smallskip
We first recall the category $SHilb$ whose objects are separable Hilbert spaces and whose morphisms are bounded linear maps.

\begin{definition}
Let $\mathcal{C}$ be a small $\mathbb{C}$-category and let $p \in [1,\infty)$.  
We consider a pair $(\mathscr{H},\mathcal F)$ where

\smallskip
(1) $\mathscr{H}:\mathcal{C}  \longrightarrow SHilb$ is a $\mathbb{C}$-linear functor and 

\smallskip
(2) $\mathcal{F}:=\{\mathcal{F}_X:\mathscr{H}(X) \longrightarrow \mathscr{H}(X)\}_{X \in Ob(\mathcal{C})}$ is a collection of bounded linear maps such that
$\mathcal{F}_X^2=id_{\mathscr{H}(X)}$ 

\smallskip
Then, we say that the pair $(\mathscr{H},\mathcal F)$ is a $p$-summable odd  Fredholm module over the category $\mathcal{C}$ if
every morphism $f:X \longrightarrow Y$ in $\mathcal{C}$ satisfies
\begin{equation}\label{equi}
[\mathcal{F},f]= \mathcal{F}_Y \circ \mathscr{H}(f) - \mathscr{H}(f) \circ \mathcal{F}_X      \in \mathcal{B}^p\left(\mathscr{H}(X),\mathscr{H}(Y)\right)
\end{equation}
\end{definition}

We consider the DG-semicategory $(\Omega' \mathcal{C},\partial')$ defined in 
Section \ref{evencycl}.

\smallskip
For any $T \in \mathcal{B}(\mathscr H(X), \mathscr H(X))$ such that $[\mathcal{F},T] \in \mathcal{B}^1(\mathscr H(X), \mathscr H(X))$, we define
\begin{equation*}
Tr'_s(T):=\frac{1}{2}~Tr\left(\mathcal F_X[\mathcal F,T]\right)=\frac{1}{2}~Tr\left( \mathcal F_X\partial'(T)\right)=\frac{1}{2}~ Tr\left(\mathcal{F}_X(\mathcal{F}_X \circ T - (-1)^{|T|}~ T \circ \mathcal{F}_X)\right)
\end{equation*}

We now proceed to show that odd summable Fredholm modules over $\mathcal{C}$ correspond to the odd cyclic cocycles over the category $\mathcal{C}$.

\begin{lemma}\label{newlem}
Let $(\mathscr{H},\mathcal{F})$ be a $p$-summable odd Fredholm module over a small $\mathbb{C}$-category $\mathcal{C}$. Then, 

\smallskip
(1) for any $n\geq p-1$, $X$, $Y\in Ob(\mathcal C)$ and $k\geq 0$, we have 
\begin{equation}\label{rt2}
Hom^k_{\Omega'{\mathcal C}}(X,Y)
\subseteq \mathcal B^{(n+1)/k}(\mathscr H(X),\mathscr H(Y))
\end{equation}

\smallskip
(2) for any operator $T \in \mathcal{B}(\mathscr H(X), \mathscr H(X))$ of even degree such that $[\mathcal{F},T] \in \mathcal{B}^1(\mathscr H(X), \mathscr H(X))$, we have $Tr'_s(T)=0$.
\end{lemma}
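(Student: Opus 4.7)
The two parts of this lemma are the odd-Fredholm analogues of Lemma~\ref{srLe1}(c) and the remark following Theorem~\ref{evencyc}. Both adapt directly, so the plan is short.

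For part (1), I would simply repeat the Hölder-inequality argument that gave Lemma~\ref{srLe1}(c). By the very construction of $\Omega'\mathcal C$ in Section~\ref{evencycl}, an element of $Hom^k_{\Omega'\mathcal C}(X,Y)$ is a finite linear combination of operators of the form
\begin{equation*}
\mathscr H(\tilde f^0)\,[\mathcal F,f^1]\,[\mathcal F,f^2]\cdots[\mathcal F,f^k],
\end{equation*}
where each $f^j$ is a morphism of $\mathcal C$. Note that this construction did not use the $\mathbb Z_2$-grading of the codomain; it is formally the same in the odd setting. The $p$-summability assumption \eqref{equi} gives $[\mathcal F,f^j]\in\mathcal B^p$ for each $j$, and since $\mathscr H(\tilde f^0)$ is bounded, the generalised Hölder inequality for Schatten norms yields that the product lies in $\mathcal B^{p/k}$. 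Because $n+1\geq p$, we have $\mathcal B^{p/k}\subseteq\mathcal B^{(n+1)/k}$, which is the desired inclusion \eqref{rt2}.

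For part (2), the key is a simple sign/anticommutation identity combined with cyclicity of the ordinary trace. With $T$ of even degree, the graded commutator reduces to the ordinary one: $[\mathcal F,T]=\mathcal F_X T-T\mathcal F_X$. Using $\mathcal F_X^2=id$, a direct computation gives
\begin{equation*}
\mathcal F_X[\mathcal F,T]=T-\mathcal F_X T\mathcal F_X \qquad\text{and}\qquad [\mathcal F,T]\mathcal F_X=\mathcal F_X T\mathcal F_X-T,
\end{equation*}
so that $\mathcal F_X[\mathcal F,T]=-[\mathcal F,T]\mathcal F_X$. Since $[\mathcal F,T]\in\mathcal B^1$ by assumption and $\mathcal F_X$ is bounded, the cyclicity relation \eqref{commutr} applies and gives
\begin{equation*}
Tr\bigl(\mathcal F_X[\mathcal F,T]\bigr)=Tr\bigl([\mathcal F,T]\mathcal F_X\bigr)=-Tr\bigl(\mathcal F_X[\mathcal F,T]\bigr),
\end{equation*}
forcing $Tr'_s(T)=\tfrac12 Tr(\mathcal F_X[\mathcal F,T])=0$.

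There is essentially no obstacle here: part (1) is an unchanged transcription of the Hölder argument from the even case, and part (2) is the one-line anticommutation-plus-cyclicity calculation. The only point worth flagging is that one must not naïvely split $Tr(T-\mathcal F_X T\mathcal F_X)$ into $Tr(T)-Tr(\mathcal F_X T\mathcal F_X)$, since neither summand need be trace class individually; working throughout with the single trace-class operator $\mathcal F_X[\mathcal F,T]$ and applying cyclicity only when both factors together lie in $\mathcal B^1\cdot\mathcal B(\mathcal H)$ avoids this issue.
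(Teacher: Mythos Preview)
Your proposal is correct and follows essentially the same approach as the paper: part (1) is the H\"older-inequality argument (the paper dismisses it in a single line), and part (2) is precisely the anticommutation identity $\mathcal F_X\partial'(T)=-\partial'(T)\mathcal F_X$ combined with cyclicity of the trace to obtain $Tr'_s(T)=-Tr'_s(T)$. Your additional caution about not splitting the trace into two potentially non-trace-class summands is a welcome clarification that the paper leaves implicit.
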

\begin{proof}
{\it (1)} This follows by H\"older's inequality and the $p$-summability.

\smallskip
{\it (2)} Using the fact that $[\mathcal F,T]$ is trace-class, we have
\begin{equation}
Tr'_s(T)=\frac{1}{2}Tr(\mathcal{F}_X\partial'(T))=-\frac{1}{2}Tr(\partial'(T)\mathcal{F}_X)=-\frac{1}{2}Tr(\mathcal{F}_X\partial'(T))=-Tr'_s(T)
\end{equation}
Hence, $Tr'_s(T)=0$. 
\end{proof}

\begin{prop}\label{Tr}
Let $(\mathscr{H},\mathcal{F})$ be a $p$-summable odd Fredholm module over a small $\mathbb{C}$-category $\mathcal{C}$. Take $2m \geq p$. Then, the collection
\begin{equation}
\widehat{Tr}_s=\{Tr'_s:Hom^{2m-1}_{\Omega'\mathcal{C}}(X,X) \longrightarrow \mathbb C\}_{X\in Ob(\mathcal C)}
\end{equation} defines a closed graded trace of dimension $2m-1$ on the DG-semicategory $(\Omega'\mathcal C,\partial')$. 
\end{prop}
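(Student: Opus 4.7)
The plan is to imitate the argument for Proposition \ref{srLe2}, replacing the use of the $\mathbb Z_2$-grading $\epsilon$ by direct sign bookkeeping. There are three things to establish: well-definedness of $Tr'_s$ on $Hom^{2m-1}_{\Omega'\mathcal C}(X,X)$, the closedness condition $Tr'_s(\partial' T')=0$, and the graded trace property.

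First, for well-definedness: if $T\in Hom^{2m-1}_{\Omega'\mathcal C}(X,X)$, then $\partial' T\in Hom^{2m}_{\Omega'\mathcal C}(X,X)$, and Lemma \ref{newlem}(1) applied with $n=2m-1$ (which is legitimate since $2m\geq p$) yields $\partial' T\in \mathcal B^{2m/2m}=\mathcal B^1$. Hence $Tr(\mathcal F_X\partial'(T))$ makes sense. The closedness is then immediate: for any $T'\in Hom^{2m-2}_{\Omega'\mathcal C}(X,X)$ we have $Tr'_s(\partial' T')=\tfrac{1}{2}Tr(\mathcal F_X\partial'^{\,2}T')=0$ by Lemma \ref{srLe1}(a).

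For the trace property, take $T_1\in Hom^i_{\Omega'\mathcal C}(X,Y)$ and $T_2\in Hom^j_{\Omega'\mathcal C}(Y,X)$ with $i+j=2m-1$. Since $i+j$ is odd, $i$ and $j$ have opposite parities, so $(-1)^{ij}=1$, and I have to show $Tr'_s(T_1T_2)=Tr'_s(T_2T_1)$. I would expand with the Leibniz rule for $\partial'=[\mathcal F,-]$ to get
\begin{equation*}
2Tr'_s(T_1T_2)=Tr(\mathcal F_Y\partial'(T_1)T_2)+(-1)^iTr(\mathcal F_Y T_1\partial'(T_2)).
\end{equation*}
From $\mathcal F^2=id$ and the definition of $\partial'$, one derives the ``anticommutation'' identities $\mathcal F_Y\partial'(T_1)=-(-1)^i\partial'(T_1)\mathcal F_X$ and $\partial'(T_2)\mathcal F_Y=-(-1)^j\mathcal F_X\partial'(T_2)$. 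Substituting and using the ordinary cyclicity of trace (justified by H\"older via Lemma \ref{newlem}(1), which places each product in $\mathcal B^1$) gives
\begin{equation*}
2Tr'_s(T_1T_2)=-(-1)^iTr(\mathcal F_X T_2\partial'(T_1))-(-1)^{i+j}Tr(\mathcal F_X\partial'(T_2)T_1).
\end{equation*}
Using $i+j=2m-1$ so that $(-1)^{i+j}=-1$ and $(-1)^{i+1}=(-1)^{2m-j}=(-1)^j$, this simplifies to $(-1)^jTr(\mathcal F_X T_2\partial'(T_1))+Tr(\mathcal F_X\partial'(T_2)T_1)$, which coincides with $2Tr'_s(T_2T_1)$ after one more application of the Leibniz rule to $\partial'(T_2T_1)$.

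The main obstacle is purely the sign bookkeeping: unlike the even case where $\epsilon$ can be pushed past $\mathcal F$ and through any homogeneous factor in a uniform way (cf.\ \eqref{sre3.4}), here each step incurs an explicit $(-1)^{|T|}$ that must be tracked through the trace cycling. The final cancellation leverages the parity constraint $i+j\equiv 1\pmod 2$, which is precisely what forces both sides to have the same overall sign.
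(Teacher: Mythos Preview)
Your proof is correct and follows essentially the same route as the paper: well-definedness via Lemma \ref{newlem}(1), closedness from $\partial'^2=0$, and the graded trace identity by expanding with the Leibniz rule, applying the anticommutation $\mathcal F\,\partial'(T)=-(-1)^{|T|}\partial'(T)\,\mathcal F$, and cycling under $Tr$. Your explicit reduction of the sign bookkeeping to the parity constraint $i+j\equiv 1\pmod 2$ is exactly what the paper does (with the same simplification $(-1)^{i+1}=(-1)^j$ and $(-1)^{i+j+1}=1$).
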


\begin{proof}
Using the $p$-summability and \eqref{rt2},  it may be easily verified that $Tr'_s$ is well defined on $Hom^{2m-1}_{\Omega'\mathcal{C}}(X,X)$.
It is also easy to see that  $Tr'_s$ is closed.  We now prove that the collection $\widehat{Tr}_s$ satisfies the condition in \eqref{trace}.  For  any $T_1\in Hom^{i}_{\Omega'\mathcal{C}}(X,Y)$ and $T_2\in Hom^{j}_{\Omega'\mathcal{C}}(Y,X)$ such that $i+j=2m-1$, using \eqref{commutr} we have
\begin{equation*}
\begin{array}{ll}
2Tr'_s(T_1T_2)&=2Tr(\mathcal{F}_Y\partial'(T_1T_2))=2Tr(\mathcal{F}_Y\partial'(T_1)T_2)+(-1)^i2Tr(\mathcal F_Y T_1\partial'(T_2))\\
&=(-1)^{i+1} 2Tr(\partial'(T_1)\mathcal{F}_X T_2)+(-1)^i2Tr(\partial'(T_2)\mathcal F_Y T_1)\\
&=(-1)^{i+1} 2Tr(\mathcal{F}_X T_2\partial'(T_1))+(-1)^{i+j+1}Tr(\mathcal{F}_X \partial'(T_2)T_1)\\
&=(-1)^{j} 2Tr(\mathcal{F}_X T_2\partial'(T_1))+Tr(\mathcal{F}_X \partial'(T_2)T_1)\\
&=2Tr'_s(T_2T_1)
\end{array}
\end{equation*}
\end{proof}

\begin{theorem}\label{oddcycle}
Let $(\mathscr{H},\mathcal{F})$ be a $p$-summable odd Fredholm module over a small $\mathbb{C}$-category $\mathcal{C}$. Take $2m \geq p$. Then, the tuple $(\Omega' \mathcal{C}, \partial', \widehat{Tr}_s,\rho')$ defines a $(2m-1)$-dimensional cycle over $\mathcal{C}$. Then, the element $\phi^{2m-1} \in CN^{2m-1}(\mathcal{C})$ defined by
\begin{equation*}
\phi^{2m-1}(f^0 \otimes f^1 \otimes \ldots \otimes f^{2m-1}):=Tr'_s\left(f^0[\mathcal F,f^1] \ldots [\mathcal F,f^{2m-1}]\right)
\end{equation*}
for any $f^0 \otimes f^1 \otimes \ldots \otimes f^{2m-1} \in Hom_{\mathcal{C}}(X_1,X) \otimes Hom_{\mathcal{C}}(X_2,X_1) \otimes \ldots \otimes Hom_{\mathcal{C}}(X,X_{2m-1})$ is a cyclic cocycle over $\mathcal{C}$.
\end{theorem}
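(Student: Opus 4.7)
My plan is to assemble the theorem from three ingredients:  (a)~$(\Omega'\mathcal{C},\partial')$ is a DG-semicategory whose degree-zero part is an ordinary category; (b)~$\widehat{Tr}_s$ is a closed graded trace of dimension $2m-1$ on $\Omega'\mathcal{C}$; and (c)~$\rho':\mathcal{C}\longrightarrow \Omega'^0\mathcal{C}$ is a $\mathbb{C}$-linear semifunctor.  Item (b) is exactly Proposition~\ref{Tr}, proved just above, and item (c) is automatic once (a) is established (identity on objects, $f\mapsto \mathscr H(f)$ on morphisms).  So the main task is to verify (a) in the odd setting, and then to deduce the cocycle property of $\phi^{2m-1}$ from the character-of-a-cycle formalism.

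For (a), I would essentially repeat Lemma~\ref{srLe1} with the only difference that $\mathscr H$ now takes values in the ungraded category $SHilb$.  The space $Hom^j_{\Omega'\mathcal{C}}(X,Y)$ is defined exactly as before, and closure under composition follows from the identity $[\mathcal F,f]\mathscr H(f') = [\mathcal F,f\circ f'] - \mathscr H(f)[\mathcal F,f']$ by induction, as in the proof of Lemma~\ref{srLe1}(a).  For $\partial'^2=0$, I would give the purely formal calculation: for $T$ of formal degree $j$, one computes
\begin{equation*}
\partial'^2 T = \mathcal F^2 T - (-1)^j\mathcal F T\mathcal F - (-1)^{j+1}\mathcal F T\mathcal F + (-1)^{2j+1} T\mathcal F^2 = 0,
\end{equation*}
using $\mathcal F_X^2=id$ on both sides; the two middle terms cancel and the two outer terms cancel.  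Compatibility with composition is the same Leibniz computation as in \cite[Proposition 5.5]{BB1}, and $\Omega'^0\mathcal{C}$ is an ordinary category since it is spanned by the operators $\mathscr H(\tilde f^0)$.  Item (c) is then the universal property.

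With (a), (b), (c) in place, the tuple $(\Omega'\mathcal{C},\partial',\widehat{Tr}_s,\rho')$ fits Definition~\ref{defsrc} as a $(2m-1)$-dimensional cycle over $\mathcal{C}$.  The character of this cycle, computed by plugging the definition $\rho'(f^i)=\mathscr H(f^i)$ and $\hat\partial'(\mathscr H(f^i))=[\mathcal F,f^i]$ into the character formula, is exactly $\phi^{2m-1}(f^0\otimes\dots\otimes f^{2m-1})=Tr'_s\bigl(f^0[\mathcal F,f^1]\dots[\mathcal F,f^{2m-1}]\bigr)$. The cyclic cocycle property then follows from \cite[Theorem 5.11]{BB1} (equivalently, the correspondence in Proposition~\ref{one-one}), which asserts that the character of any cycle over $\mathcal{C}$ lies in $Z^\bullet_\lambda(\mathcal{C})$.

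I do not anticipate a serious obstacle here: the result is essentially a repackaging of Lemma~\ref{srLe1} and Proposition~\ref{Tr} via the cycle-character machinery.  The only point requiring minor care is confirming that the sign bookkeeping in $\partial'^2=0$ and in closure under composition still works when $\mathscr H$ is ungraded, which it does because those arguments only see the formal degrees coming from the number of commutators $[\mathcal F,f^i]$ and the relation $\mathcal F_X^2=id$, both of which are unchanged in the odd setting.
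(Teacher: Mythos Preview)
Your proposal is correct and follows exactly the route the paper takes: the paper leaves Theorem~\ref{oddcycle} without an explicit proof, relying implicitly on the analogy with Theorem~\ref{evencyc}, i.e., combining the DG-semicategory structure of $(\Omega'\mathcal C,\partial')$ from Lemma~\ref{srLe1} with the closed graded trace established in Proposition~\ref{Tr} and then invoking the cycle--character correspondence of \cite[Theorem~5.11]{BB1}. Your only addition is to spell out why Lemma~\ref{srLe1}(a) carries over to the ungraded setting, which is indeed purely formal as you note.
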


We will refer to $\phi^{2m-1}$ as the $(2m-1)$-dimensional character associated with the $p$-summable odd Fredholm module $(\mathscr{H},\mathcal F)$ over the category $\mathcal{C}$.

\begin{remark}\label{remf}
Using Lemma \ref{newlem} {\it (2)}, we see that only odd dimensional cyclic cocycles appear in the statement of Theorem \ref{oddcycle} because
\begin{equation*}
\phi^{2m}(f^0 \otimes f^1 \otimes \ldots \otimes f^{2m}):=Tr'_s\left(f^0[\mathcal F,f^1] \ldots [\mathcal F,f^{2m}]\right)=0
\end{equation*}
\end{remark}

\begin{bibdiv}
\begin{biblist}
\bib{BB1}{article}{
   author={Balodi, M.},
   author={Banerjee, A.},
   title={Cycles over DGH-categories and pairings in categorical Hopf-cyclic cohomology},
   journal={arXiv:1901.09580 [math.CT] (preprint)},
}

\bib{C1}{article}{
   author={Connes, A.},
   title={Cohomologie cyclique et foncteurs ${\rm Ext}^n$},
   journal={C. R. Acad. Sci. Paris S\'er. I Math.},
   volume={296},
   date={1983},
   number={23},
   pages={953--958},
}

\bib{C2}{article}{
   author={Connes, A.},
   title={Noncommutative differential geometry},
   journal={Inst. Hautes \'{E}tudes Sci. Publ. Math.},
   volume={62},
   date={1985},
   pages={257--360},
}

\bib{C3}{article}{
   author={Connes, A.},
   title={On the Chern character of $\theta$ summable Fredholm modules},
   journal={Comm. Math. Phys.},
   volume={139},
   date={1991},
   number={1},
   pages={171--181},
 
}

\bib{GSz}{article}{
   author={Getzler, E.},
   author={Szenes, A.},
   title={On the Chern character of a theta-summable Fredholm module},
   journal={J. Funct. Anal.},
   volume={84},
   date={1989},
   number={2},
   pages={343--357},
 
}

\bib{GS}{article}{
   author={Ginzburg, V.},
   author={Schedler, T.},
   title={A new construction of cyclic homology},
   journal={Proc. Lond. Math. Soc. (3)},
   volume={112},
   date={2016},
   number={3},
   pages={549--587},
}

\bib{HSS}{article}{
   author={Hoyois, M.},
   author={Scherotzke, S.},
   author={Sibilla, N.},
   title={Higher traces, noncommutative motives, and the categorified Chern
   character},
   journal={Adv. Math.},
   volume={309},
   date={2017},
   pages={97--154},

}


\bib{Kar1}{article}{
   author={Karoubi, M.},
   title={Homologie cyclique des groupes et des alg\`ebres},
   language={French, with English summary},
   journal={C. R. Acad. Sci. Paris S\'{e}r. I Math.},
   volume={297},
   date={1983},
   number={7},
   pages={381--384},
 
}

\bib{Kar2}{article}{
   author={Karoubi, M.},
   title={Homologie cyclique et $K$-th\'{e}orie},
   language={French, with English summary},
   journal={Ast\'{e}risque},
   number={149},
   date={1987},
   pages={147},

}

\bib{LQ}{article}{
   author={Loday, J.-L.},
   author={Quillen, D.},
   title={Cyclic homology and the Lie algebra homology of matrices},
   journal={Comment. Math. Helv.},
   volume={59},
   date={1984},
   number={4},
   pages={569--591},
}

\bib{Loday}{book}{
   author={Loday, J.-L.},
   title={Cyclic homology},
   series={Grundlehren der Mathematischen Wissenschaften [Fundamental
   Principles of Mathematical Sciences]},
   volume={301},
   publisher={Springer-Verlag, Berlin},
   date={1992},
   pages={xviii+454},

}

\bib{carthy}{article}{
   author={McCarthy, R.},
   title={The cyclic homology of an exact category},
   journal={J. Pure Appl. Algebra},
   volume={93},
   date={1994},
   number={3},
   pages={251--296},
}

\bib{Mit0}{article}{
   author={Mitchell, B.},
   title={Rings with several objects},
   journal={Adv. Math.},
   volume={8},
   date={1972},
   pages={1--161},
}

\bib{Mit}{article}{
   author={Mitchell, B.},
   title={The dominion of Isbell},
   journal={Trans. Amer. Math. Soc.},
   volume={167},
   date={1972},
   pages={319--331},
}
\end{biblist}
\end{bibdiv}
\end{document}